\newcommand{\minitab}[2][l]{\begin{tabular}#1 #2\end{tabular}}
\definecolor{codegreen}{rgb}{0,0.6,0}
\definecolor{codegray}{rgb}{0.5,0.5,0.5}
\definecolor{codepurple}{rgb}{0.58,0,0.82}
\definecolor{backcolour}{rgb}{0.95,0.95,0.92}
\lstdefinestyle{mystyle}{
	backgroundcolor=\color{backcolour},   
	commentstyle=\color{codegreen},
	keywordstyle=\color{magenta},
	numberstyle=\tiny\color{codegray},
	stringstyle=\color{codepurple},
	basicstyle=\ttfamily\footnotesize,
	breakatwhitespace=false,         
	breaklines=true,                 
	captionpos=b,                    
	keepspaces=true,                 
	numbers=left,                    
	numbersep=5pt,                  
	showspaces=false,                
	showstringspaces=false,
	showtabs=false,                  
	tabsize=2
}
\theoremstyle{definition}
\newtheorem{mydef}{Definition}[section]
\newtheorem{myrem}{Remark}[section]
\theoremstyle{plain}
\newtheorem{mytheo}{Theorem}[section]
\newtheorem{mycor}{Corollary}[section]
\newtheorem{myprop}{Proposition}[section]
\title{Classification of differentially non-degenerate left-symmetric algebras in dimension 3}
\author{Serena Scapucci\footnote{Institut f\"ur Mathematik, Friedrich Schiller Universit\"at Jena, 07743 Jena Germany, serena.scapucci@uni-jena.de}}
\date{}
\begin{document}
	
	\maketitle
	
\begin{abstract}
	 In terms of Nijenhuis geometry, left-symmetric algebras are the same as Nijenhuis operators whose entries are linear in coordinates. Here we consider Nijenhuis operators for which the coefficients of its characteristic polynomial are algebraically independent. We give a classification of this type of operators and hence of the corresponding LSAs (which we will call differentially non-degenerate LSA) in dimension 3.
\end{abstract}

\section{Introduction}

We recall some basic definitions in the field of Nijenhuis Geometry and left-symmetric algebras (LSA). See also \cite{NijGeo}. 
\subsection{Nijenhuis Geometry} 

Let $L$ be a $(1,1)$-tensor field on a smooth manifold $M^n$ of dimension $n$. Then one can define the Nijenhuis torsion $\mathcal{N}_L$ of the operator $L$ in several equivalent ways.
\begin{mydef}
	For a pair of vector fields $\xi$ and $\eta$, the \textit{Nijenhuis torsion} $\mathcal{N}_L$ is defined by the following formula:
	\begin{center}
		$\mathcal{N}_L(\xi,\eta):=L^2[\xi,\eta]+[L\xi,L\eta]-L[L\xi,\eta]-L[\xi,L\eta]$.
	\end{center}
\end{mydef}
\begin{mydef}
	In local coordinates $x^1,\dots,x^n$, the components $(\mathcal{N}_L)_{jk}^i$ of $\mathcal{N}_L$ are defined as follows:
	\begin{center}
		$(\mathcal{N}_L)_{jk}^i:=L^s_j\frac{\partial L^i_k}{\partial x^s}-L^s_k\frac{\partial L^i_j}{\partial x^s}-L^i_s\frac{\partial L^s_k}{\partial x^j}+L^i_s\frac{\partial L^s_j}{\partial x^k}$
	\end{center}
	where $L^i_j$ denote the components of $L$.
\end{mydef}
The latter is most convenient for computations. 

\begin{mydef}
	A $(1,1)$-tensor field $L$ defined on a smooth manifold $M^n$ is called \textit{Nijenhuis operator} if its Nejenhuis torsion vanishes, i.e. if
	\begin{center}$\mathcal{N}_L=0.$\end{center} 
\end{mydef}
We consider Nijenhuis operators with singular points. At each point $p\in M$, we can define the \textit{algebraic type} of $L(p):T_pM\to T_pM$ as the structure of its Jordan canonical form, which is characterized by the sizes of the Jordan blocks related to each eigenvalue $\lambda_i$ of $L(p)$. 

\begin{mydef}
	A point $p\in M$ is called \textit{algebraically generic}, if the algebraic type of $L$ does not change in some neighbourhood of $p$ in $M$. (We will also say that $L$ is algebraically generic at $p\in M$).
	A point $p\in M$ is called \textit{singular}, if it is not algebraically generic.
\end{mydef}

One more notion is related to analytic properties of the coefficients of the characteristic polynomial $\chi_{L(x)}(t)=\det(t\cdot\text{Id}-L(x))=\sum_{k=0}^{n}t^k\sigma_{n-k}(x)$ of the operator $L$.

\begin{mydef}
	A point $x\in M$ is called \textit{differentially non-degenerate}, if the differentials d$\sigma_1(x),\dots,$d$\sigma_n(x)$ of the coefficients of the characteristic polynomial $\chi_{L(x)}(t)$ are linearly independent at this point.
\end{mydef} 

In this case we say that the functions $\sigma_1,\dots,\sigma_n$ are \textit{functionally independent} at the point $x\in M$. If the above condition is satisfied almost everywhere, then $\sigma_1,\dots,\sigma_n$ are called \textit{functionally independent.}

\subsection{Left-symmetric algebras}

Let $\mathfrak{a}$ be an algebra of dimension $n$ over $\mathbb{R}$ with the multiplication $\star$, i.e. a vector space with a bilinear product called $\star$. The associator $\mathcal{A}$ is a trilinear operation on $\mathfrak{a}$, defined on arbitrary triple $\xi,\eta,\zeta\in\mathfrak{a}$ as $\mathcal{A}(\xi,\eta,\zeta) = (\xi\star\eta)\star\zeta-\xi\star(\eta\star\zeta)$ for $\xi,\eta,\zeta\in\mathfrak{a}$. Notice that associators identically vanish if and only if $\mathfrak{a}$ is associative.

\begin{mydef}
	An algebra $\mathfrak{a}$ is called \textit{left-symmetric (or LSA)} if its associator is symmetric in the first two terms, i.e. 
	\begin{center}$\mathcal{A}(\xi,\eta,\zeta)=\mathcal{A}(\eta,\xi,\zeta),  \text{for all }\xi,\eta,\zeta\in\mathfrak{a}.$\end{center}
\end{mydef}

This means that left-symmetric algebras are in general non-associative.

LSAs appear in the literature with other names, namely pre-Lie algebras and Vinberg-Kozul algebras. These algebras were introduced by Vinberg \cite{EBV} in his study of the homogeneous cones. They appeared then in different frameworks of geometry, integrable systems and quantum mechanics (see also \cite{Bur}). In particular, the Novikov algebras, that play an important role in the theory of Poisson brackets of the hydrodynamic type are left-symmetric. 

\subsection{Relation between LSA and Nijenhuis operators}

We follow the description given by A. Konyaev \cite{NijGeo2}. \newline First, assume that $\mathfrak{a}$ is an arbitrary finite dimensional algebra over $\mathbb{R}$ (of dimension $n$). Then $\mathfrak{a}$ has a natural structure of a smooth $n$-dimensional affine manifold, which can be denoted as the algebra itself. Let $\eta$ be a point of this manifold, then one can naturally identify the tangent space at $\eta$ with $\mathfrak{a}$ itself. One can now define a tensor field $R$ of type (1,1) as follows: $R$ acts on element $\xi\in T_{\eta}\mathfrak{a}$ by the right-adjoint action, i.e. $R_{\eta}\xi=\xi\star\eta$. \newline We now start from the right-adjoint action defined above and we see what kind of operator we reach. \newline Fix a basis $\eta_i$ in the algebra $\mathfrak{a}$ and denote by $a_{ij}^k$ the structure constants of $\mathfrak{a}$. This basis defines a natural coordinate system on $\mathfrak{a}$, we denote it with coordinates $x^i$, i.e. $\eta=x^i\eta_i$. The components of $R_\eta$ are written as $(R_\eta)_i^k=a_{ij}^kx^j$. In particular, if we see $R_\eta$ as a linear operator, the entries of its matrix are homogeneous linear functions of $x^i$. Thus, the right-adjoint action of $\mathfrak{a}$ on itself induces the operator field with homogeneous linear components.

\begin{mydef}
	We call the above operator field $R_\eta$ the \textit{right-adjoint operator of} $\mathfrak{a}$.
\end{mydef}

Let $R_\eta$ be an operator field on a real affine space $\mathfrak{a}$ with given coordinates $x^i$ and we assume that its entries are homogeneous linear functions (we call such operator fields \textit{linear operator fields}). Then $\mathfrak{a}$ has a natural structure of algebra over $\mathbb{R}$ and its structure constants are $a_{ij}^k=\frac{\partial R_i^k}{\partial x^j}$. In this construction $R$ becomes the right-adjoint operator of $\mathfrak{a}$. This yields a natural bijection between real algebras and linear operator fields on real affine spaces.

The following proposition (\cite{NijGeo2}, Proposition 1.1) establishes the relation between homogeneous Nijenhuis operators of degree one and left-symmetric algebras.

\begin{myprop}
	Let $\mathfrak{a}$ be an algebra over $\mathbb{R}$ of dimension $n$. The following conditions are equivalent:
	\begin{enumerate}
		\item $\mathfrak{a}$ is a left-symmetric algebra
		\item The right-adjoint operator $R_{\eta}$ of $\mathfrak{a}$ is a Nijenhuis operator.
	\end{enumerate}
\end{myprop}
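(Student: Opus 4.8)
The plan is to prove both implications at once by establishing a single pointwise identity relating the Nijenhuis torsion of the right-adjoint operator to the associator of $\mathfrak{a}$. Concretely, I will show that for all $\xi,\eta\in\mathfrak{a}$ and every point $p$ of the affine space $\mathfrak{a}$ (identifying each tangent space with $\mathfrak{a}$),
\[
\mathcal{N}_{R}(\xi,\eta)(p)=\mathcal{A}(\xi,\eta,p)-\mathcal{A}(\eta,\xi,p).
\]
Once this is available the proposition is immediate: the right-hand side vanishes identically in $\xi,\eta,p$ exactly when the associator is symmetric in its first two arguments, i.e. exactly when $\mathfrak{a}$ is left-symmetric, while the left-hand side vanishes identically exactly when $R_\eta$ is a Nijenhuis operator. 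Thus no direction requires a separate argument.

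The cleanest route to the identity is to work coordinate-freely with constant vector fields. The key structural fact is that $R$ is right multiplication by the position: the vector field $R\xi$ takes the value $\xi\star p$ at $p$, so it is linear in $p$, and its directional derivative in a direction $v$ is simply $\xi\star v$. Constant vector fields commute, so in the defining formula $\mathcal{N}_R(\xi,\eta)=R^2[\xi,\eta]+[R\xi,R\eta]-R[R\xi,\eta]-R[\xi,R\eta]$ the first term drops out, and each remaining Lie bracket is computed as a difference of directional derivatives. I expect to obtain $[R\xi,R\eta](p)=\eta\star(\xi\star p)-\xi\star(\eta\star p)$, together with the constant vector fields $[R\xi,\eta](p)=-(\xi\star\eta)$ and $[\xi,R\eta](p)=\eta\star\xi$, whence $R[R\xi,\eta](p)=-(\xi\star\eta)\star p$ and $R[\xi,R\eta](p)=(\eta\star\xi)\star p$. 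Substituting these four expressions and regrouping the resulting products into two associators yields the displayed identity.

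As a cross-check, and as the route most faithful to the coordinate formula, I would also run the computation in coordinates. Writing $R^i_j=a^i_{jk}x^k$ gives $\partial R^i_j/\partial x^s=a^i_{js}$, so every term of $(\mathcal{N}_R)^i_{jk}$ is a product of one undifferentiated entry (linear in $x$) with one constant derivative; hence $(\mathcal{N}_R)^i_{jk}=x^p\,C^i_{jkp}$ with $C^i_{jkp}$ quadratic in the structure constants. Being homogeneous linear in $x$, it vanishes on all of $\mathfrak{a}$ if and only if $C^i_{jkp}=0$ for all indices. Computing the associator on basis vectors, $\mathcal{A}(e_a,e_b,e_c)=(a^m_{ab}a^k_{mc}-a^m_{bc}a^k_{am})e_k$, and antisymmetrizing in the first two slots, I would then match $C^i_{jkp}$ term by term with the components of $\mathcal{A}(e_j,e_k,e_p)-\mathcal{A}(e_k,e_j,e_p)$; trilinearity upgrades this basis identity to the full one for arbitrary elements.

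The main obstacle is purely the bookkeeping in this matching step. The delicate point is that $R$ is right multiplication, so the differentiated slot corresponds to the position variable sitting in the second factor of each product; one must track which summation pattern of the product of two structure constants encodes a left-parenthesized product $(\xi\star\eta)\star\zeta$ versus a right-parenthesized one $\xi\star(\eta\star\zeta)$. In particular the two terms of $(\mathcal{N}_R)^i_{jk}$ arising from differentiation with respect to $x^j$ and $x^k$ must be recognized as the $(\xi\star\eta)\star\zeta$ contributions and the remaining two as the $\xi\star(\eta\star\zeta)$ contributions. Getting these assignments and signs right is the only place an error can creep in, but beyond careful index discipline I expect no genuine difficulty.
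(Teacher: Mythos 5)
Your proposal is correct, and it is in fact more self-contained than what the paper does: the paper offers no proof of this proposition at all, but simply quotes it from \cite{NijGeo2} (Proposition 1.1) and points to \cite{WINT}, Theorem 1.3, Remark 1.3, for details. Your key identity $\mathcal{N}_{R}(\xi,\eta)(p)=\mathcal{A}(\xi,\eta,p)-\mathcal{A}(\eta,\xi,p)$ for constant vector fields $\xi,\eta$ is true, and the intermediate computations you predict all check out: since $q\mapsto\xi\star q$ is linear, the directional derivative of $R\xi$ along $v$ is $\xi\star v$, hence $[R\xi,R\eta](p)=\eta\star(\xi\star p)-\xi\star(\eta\star p)$, $[R\xi,\eta]=-\xi\star\eta$, $[\xi,R\eta]=\eta\star\xi$, and the four terms of the torsion sum to $(\xi\star\eta)\star p-\xi\star(\eta\star p)-(\eta\star\xi)\star p+\eta\star(\xi\star p)$, which is exactly the antisymmetrized associator. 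The one point you should state explicitly rather than leave implicit is why vanishing of $\mathcal{N}_R$ on \emph{constant} fields already makes $R$ a Nijenhuis operator: either invoke tensoriality of the Nijenhuis torsion, or, most directly, note that in the paper's coordinate definition the components $(\mathcal{N}_R)^i_{jk}$ are precisely the evaluations of the torsion on the coordinate vector fields, which are constant in the affine coordinates, so vanishing on constant fields is literally the vanishing of all components. With that remark your single identity yields both implications at once, as you claim. Your proposed coordinate cross-check is essentially the route the cited sources take, matching quadratic expressions in the structure constants against the left-symmetry identity; it is correct but strictly harder bookkeeping, as you yourself flag, so the coordinate-free argument should be promoted to the primary proof and the index matching dropped.
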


To learn more about this proposition, see \cite{WINT}, Theorem 1.3, Remark 1.3.

We introduce the following definition.
\begin{mydef}
	Let $\mathfrak{a}$ be a LSA and $R_{\eta}$ its right-adjoint operator. If the coefficients of the characteristic polynomial of $R_{\eta}$ are algebraically independent, then we call the algebra $\mathfrak{a}$ \textit{differentially non-degenerate}.
\end{mydef}

Clearly, algebraic independence of the coefficients of the characteristic polynomial of $R_{\eta}$ implies that almost all points in $\mathfrak{a}$ are differentially non-degenerate.

\subsection{Main Result}
The research topic of this work is one of the open problems listed in \cite{OpenProb} by Bolsinov A., Matveev V.S., Miranda E. and Tabachnikov S., namely Problem 5.16, which suggests the classification of the following equivalent objects:
\begin{enumerate}
	\item[(a)] Differentially non-degenerate LSAs
	\item[(b)] Linear Nijenhuis operator whose coefficients of the characteristic polynomial are algebraically independent. 
\end{enumerate}
A classification of these objects in dimension 2 can be easily recovered from the more general one given by A. Konyaev \cite{NijGeo2} of all 2-dimensional LSAs. In this work we give a solution to this problem in dimension 3. 
\begin{mytheo}
	The classification in dimension 3 of differentially non-degenerate LSAs or equivalently linear Nijenhuis operators whose coefficients of the characteristic polynomial are algebraically independent is presented in Table 1 and Table 2, the decomposable and indecomposable ones respectively. For each case the structure relations of the LSA, the corresponding right-adjoint Nijenhuis operator and the coefficients of its characteristic polynomial are given.
\begin{center}
	\captionof{table}{Decomposable 3-dimensional LSA}
	\begin{tabular}{|c|c|c|c|}
		\hline & & &\\[-2.5ex]
		Name & Structure relations & Nijenhuis operator $L$ & Coefficients of $\chi_{L}$\\
		\hline & & &\\[-1.5ex]
		$\mathfrak{b_4^+}\oplus\mathfrak{d}$ & $\begin{aligned}
			\eta_1\star\eta_1 &= 2\eta_1 \\
			\eta_1\star\eta_2 &= \eta_2 \\
			\eta_2\star\eta_2 &= -\eta_1 \\
			\eta_3\star\eta_3 &= \eta_3  \\
		\end{aligned}$ & $\left(\begin{array}{ccc}  
		2x & -y & 0\\
		y & 0 & 0\\ 
		0 & 0 & z 
	\end{array}\right)$ & $\begin{aligned}
								\sigma_1 &=  -2x-z\\
								\sigma_2 &=  y^2+2xz\\
								\sigma_3 &=  -y^2z\\
							\end{aligned}$\\
		\hline& & &\\[-1.5ex]
		$\mathfrak{b_4^-}\oplus\mathfrak{d}$ & $\begin{aligned}
			\eta_1\star\eta_1 &= 2\eta_1 \\
			\eta_1\star\eta_2 &= \eta_2 \\
			\eta_2\star\eta_2 &= \eta_1 \\
			\eta_3\star\eta_3 &=  \eta_3 \\
		\end{aligned}$ & $\left(\begin{array}{ccc} 
		2x & y & 0 \\ 
		y & 0 & 0 \\ 
		0 & 0 & z 
	\end{array}\right)$ & $\begin{aligned}
								\sigma_1 &=  -2x-z\\
								\sigma_2 &=  -y^2+2xz\\
								\sigma_3 &=  y^2z\\
							\end{aligned}$\\
		\hline& & &\\[-1.5ex]
		$\mathfrak{c_5^+}\oplus\mathfrak{d}$ & $\begin{aligned}
			\eta_1\star\eta_1 &= \eta_1 \\
			\eta_1\star\eta_2 &= \eta_2 \\
			\eta_2\star\eta_1 &= \eta_2 \\
			\eta_2\star\eta_2 &= \eta_1 \\
			\eta_3\star\eta_3 &= \eta_3 \\
		\end{aligned}$ &  $\left(\begin{array}{ccc} 
		x & y & 0 \\ 
		y & x & 0 \\ 
		0 & 0 & z 
	\end{array}\right)$ & $\begin{aligned}
								\sigma_1 &=  -2x-z\\
								\sigma_2 &=  x^2-y^2+2xz\\
								\sigma_3 &=  -x^2z+y^2z\\
							\end{aligned}$\\
		\hline & & &\\[-1.5ex]
		$\mathfrak{c_5^-}\oplus\mathfrak{d}$ & $\begin{aligned}
			\eta_1\star\eta_1 &= \eta_1 \\
			\eta_1\star\eta_2 &= \eta_2 \\
			\eta_2\star\eta_1 &= \eta_2 \\
			\eta_2\star\eta_2 &= -\eta_1 \\
			\eta_3\star\eta_3 &= \eta_3 \\
		\end{aligned}$ & $\left(\begin{array}{ccc} 
		x & -y & 0 \\
		y & x & 0 \\
		0 & 0 & z 
	\end{array}\right)$ & $\begin{aligned}
								\sigma_1 &=  -2x-z\\
								\sigma_2 &=  x^2+y^2+2xz\\
								\sigma_3 &=  -x^2z-y^2z\\
							\end{aligned}$\\
		\hline
	\end{tabular}

	\captionof{table}{Indecomposable 3-dimensional LSA}
	\begin{tabular}{|c|c|c|}
		\hline & &\\[-2.5ex]
		 Structure relations & Nijenhuis operator $L$ & Coefficients of $\chi_{L}$\\
		\hline & & \\[-1.5ex]
		 $\begin{aligned}
			\eta_1\star\eta_1 &= 2\eta_1 \\
			\eta_1\star\eta_2 &= \eta_2 \\
			\eta_1\star\eta_3 &= -\eta_1 \\
			\eta_2\star\eta_2 &= -\eta_1 \\
			\eta_2\star\eta_3 &= \eta_2 \\
			\eta_3\star\eta_1 &= -\eta_1 \\
			\eta_3\star\eta_3 &= \eta_1 + \eta_3 \\
		\end{aligned}$ & $\left(\begin{array}{ccc}
		2x-z & -y & z-x \\
		y & z & 0 \\
		0 & 0 & z
	\end{array}\right)$ & $\begin{aligned}
								\sigma_1 &= -2x-z\\
								\sigma_2 &=  y^2 + 4xz - z^2\\
								\sigma_3 &=  - y^2z - 2xz^2 + z^3\\
							\end{aligned}$\\
		\hline & &\\[-1.5ex]
		 $\begin{aligned}
			\eta_1\star\eta_1 &= 2\eta_1 \\
			\eta_1\star\eta_2 &= \eta_2 \\
			\eta_1\star\eta_3 &= -\eta_1 \\	
			\eta_2\star\eta_2 &= \eta_1 \\
			\eta_2\star\eta_3 &= \eta_2 \\
			\eta_3\star\eta_1 &= -\eta_1 \\
			\eta_3\star\eta_3 &= \eta_1 + \eta_3 \\
		\end{aligned}$ & $\left(\begin{array}{ccc}
		2x-z & y & z-x \\
		y & z & 0 \\
		0 & 0 & z
	\end{array}\right)$ & $\begin{aligned}
								\sigma_1 &= -2x-z \\
								\sigma_2 &=  -y^2 + 4xz - z^2\\
								\sigma_3 &=  y^2z - 2xz^2 + z^3\\
							\end{aligned}$\\
		\hline & &\\[-1.5ex]
		 $\begin{aligned}
			\eta_1\star\eta_1 &= \eta_1 \\
			\eta_1\star\eta_2 &= \eta_2 \\
			\eta_2\star\eta_3 &= -\eta_1 \\
			\eta_3\star\eta_2 &= -\eta_1 -\eta_2\\
			\eta_3\star\eta_3 &= \eta_3 \\
		\end{aligned}$ & $\left(\begin{array}{ccc}
		x & -z & -y \\
		y & 0 & -y \\
		0 & 0 & z
	\end{array}\right)$& $\begin{aligned}
								\sigma_1 &= -x-z \\
								\sigma_2 &= xz + yz \\
								\sigma_3 &= -yz^2 \\
						  \end{aligned}$\\
		\hline & &\\[-1.5ex]
		 $\begin{aligned}
			\eta_1\star\eta_1 &= -\eta_1 \\
			\eta_1\star\eta_2 &= -\tfrac{2}{3}\eta_2 \\
			\eta_1\star\eta_3 &= -\tfrac{1}{3}\eta_3 \\
			\eta_2\star\eta_3 &= \eta_1 \\
			\eta_3\star\eta_2 &= \eta_1 \\
			\eta_3\star\eta_3 &= \eta_2 \\
		\end{aligned}$ & $\left(\begin{array}{rrr}
		-x & z & y \\
		-\frac{2}{3} \, y & 0 & z \\
		-\frac{1}{3} \, z & 0 & 0
	\end{array}\right)$ & $\begin{aligned}
							\sigma_1 &= x \\
							\sigma_2 &= yz \\
							\sigma_3 &= \frac{1}{3}z^3 \\
						   \end{aligned}$\\
		\hline
	\end{tabular}
\end{center}	  
\end{mytheo}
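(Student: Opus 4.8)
The plan is to use Proposition 1.1 to convert the classification of differentially non-degenerate LSAs into the classification, up to linear change of coordinates, of linear Nijenhuis operators $L$ on $\mathbb{R}^3$ whose three characteristic coefficients $\sigma_1,\sigma_2,\sigma_3$ (homogeneous of degrees $1,2,3$) are algebraically independent. Two such operators correspond to isomorphic LSAs precisely when they are related by a linear substitution $\tilde x = Cx$, under which the $(1,1)$-tensor transforms as $\tilde L(\tilde x)=C\,L(C^{-1}\tilde x)\,C^{-1}$; since $L$ is linear in $x$ this keeps the entries linear, so the whole problem lives inside a finite-dimensional $GL(3,\mathbb{R})$-action on structure constants. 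First I would record the two features that make the problem rigid: writing $L^i_j=a^i_{jk}x^k$, the Nijenhuis identity $\mathcal N_L=0$ becomes a homogeneous system of \emph{quadratic} equations in the constants $a^i_{jk}$, and the Euler scaling symmetry $L(\lambda x)=\lambda L(x)$ together with the gradings $\deg\sigma_k=k$ constrains how the eigenvalues of $L(x)$ may depend on the linear coordinates.

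Second, I would bring in the structure theory of differentially non-degenerate Nijenhuis operators. The key fact is that differential non-degeneracy forces $L$ to be $\mathfrak{gl}$-regular at generic points (a single Jordan block per eigenvalue), since a non-regular value of $L$ would make the $d\sigma_k$ dependent; this already rules out, e.g., scalar fibres. By the canonical-form theorem for such operators one may take $\sigma_1,\sigma_2,\sigma_3$ as local coordinates in which $L$ becomes the companion matrix of its characteristic polynomial. Combining regularity, the companion form, and the homogeneity above pins down the admissible real algebraic types of the generic fibre and lets me split the classification according to how the characteristic polynomial factors over $\mathbb{R}$: three distinct real linear factors, one real linear factor together with a complex-conjugate pair, or a fibre carrying a genuine companion block coming from an irreducible quadratic factor whose roots are not linear forms. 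This trichotomy is exactly what separates the decomposable cases (Table 1) from the indecomposable ones (Table 2).

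Third, I would dispose of the decomposable case by reduction to dimension $2$. A decomposable differentially non-degenerate $3$-dimensional LSA must split as a differentially non-degenerate $2$-dimensional LSA plus the unique nontrivial $1$-dimensional LSA $\mathfrak d$ (contributing the eigen-direction $z$); reading off the admissible $2$-dimensional summands from Konyaev's $2$-dimensional list \cite{NijGeo2} and applying the algebraic-independence test isolates precisely $\mathfrak b_4^\pm$ and $\mathfrak c_5^\pm$, giving the four rows of Table 1. For the indecomposable case I would instead solve the quadratic Nijenhuis system directly under the non-degeneracy constraint: the spectral splitting above fixes a normal form of $L$ with only a few free structure constants, after which I eliminate the remaining continuous parameters using the residual $GL(3,\mathbb{R})$-freedom (rescalings, shears, and rotations preserving the block structure). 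Each surviving normal form is then checked by direct computation to satisfy $\mathcal N_L=0$ and to have algebraically independent $\sigma_1,\sigma_2,\sigma_3$, producing the four rows of Table 2.

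The hard part will be the indecomposable case: proving \emph{exhaustiveness}, i.e. that the coupled quadratic equations together with algebraic independence admit no families beyond those listed, and reducing all continuous moduli to the discrete normal forms. I expect to need a careful elimination — in practice a Gröbner-basis computation on the $a^i_{jk}$ — to confirm the solution variety, and a set of isomorphism invariants (the factorization type and degrees of the $\sigma_k$, the dimension of the associated commutative or nilpotent subalgebra, the position of the singular set where non-degeneracy fails) to prove that the four normal forms are pairwise non-isomorphic and that none of them secretly decomposes. Once exhaustiveness and irredundancy are established, verifying the Nijenhuis identity and the algebraic independence of the $\sigma_i$ for each representative is routine and can be relegated to a direct check.
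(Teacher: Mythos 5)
Your reduction via Proposition 1.1 to linear Nijenhuis operators modulo linear coordinate changes, your observation that differential non-degeneracy forces $\mathfrak{gl}$-regularity and a companion form, and your treatment of the decomposable case (a decomposable differentially non-degenerate LSA must be a $2$-dimensional differentially non-degenerate LSA plus $\mathfrak{d}$, so Konyaev's list gives the four rows of Table 1) are all sound. The gaps are concentrated exactly where the content of the theorem lies: exhaustiveness of Table 2. Your organizing trichotomy is false: the real factorization type of $\chi_L$ does not separate decomposable from indecomposable operators. Compare $\mathfrak{b_4^+}\oplus\mathfrak{d}$ from Table 1, where $\chi_L=(t-z)\left(t^2-2xt+y^2\right)$, with the first entry of Table 2, where $\chi_L=(t-z)\left(t^2-2xt+2xz-z^2+y^2\right)$: both are a linear factor times a quadratic irreducible over $\mathbb{R}[x,y,z]$, and in both the pointwise algebraic type changes from region to region, since the discriminants $4(x^2-y^2)$ and $4\left((x-z)^2-y^2\right)$ change sign. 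Decomposability is a property of the algebra (simultaneous linear block-diagonalization of $L$), not of the spectral picture, so your claimed case split cannot ``fix a normal form of $L$ with only a few free structure constants,'' and with it your route to exhaustiveness collapses.

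Your fallback --- Gr\"obner elimination on the $27$ structure constants $a^i_{jk}$ subject to the quadratic Nijenhuis equations, followed by extracting $GL(3,\mathbb{R})$-orbits and imposing algebraic independence --- is not shown to be tractable, and it is precisely the computation the paper is built to avoid. The paper inverts the parametrization: by Corollary 2.1 every differentially non-degenerate Nijenhuis operator satisfies $L=J^{-1}S_{\chi}J$, and conversely this formula applied to any algebraically independent homogeneous $\sigma_1,\sigma_2,\sigma_3$ of degrees $1,2,3$ produces a Nijenhuis operator automatically (it is the pullback of the companion form under the map $x\mapsto(\sigma_1(x),\sigma_2(x),\sigma_3(x))$, a local diffeomorphism almost everywhere), so the Nijenhuis equations never have to be solved. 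One then classifies the triples of $\sigma$'s instead of the operators: $\sigma_1$ and $\sigma_2$ are reduced to five normal forms under linear changes (Proposition 2.3), $\sigma_3$ is kept general with ten coefficients, and the only condition left to impose is that each entry $P_i/Q$ of $J^{-1}S_{\chi}J$ be linear, i.e.\ the divisibility conditions $P_i-Q\cdot(\alpha_{i1}x_1+\alpha_{i2}x_2+\alpha_{i3}x_3)=0$, a small polynomial system that computer algebra does solve; exhaustiveness is then immediate because all possible $\sigma$-triples were covered up to linear equivalence. If you wish to keep your structure-constant approach you must supply a workable normal-form reduction before attacking the quadratic system; as written, the central step of the proof is missing.
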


The problem was independtly considered, and an equivalent version of Theorem 1.1 was  obtained in \cite{Sof}.

I thank Vladimir Matveev for suggesting this topic for my Master Thesis. I acknowledge partial support of DFG via  projects 455806247 and 529233771.  A part of results was obtained during my participation in the preworkshop  and workshop on Nijenhuis Geometry and Integrable Systems at the La Trobe University and Matrix Institute. I thank Alexey Bolsinov for useful discussions and Loughborough University for hospitality.
\newpage
\section{Proof}

\subsection{Algorithm}

The first step of the proof of Theorem 1.1 consists in an algorithm that compute Nijenhuis operators starting from the coefficients of its characteristic polynomials, we will denote them with $\sigma_i$. The idea of this algorithm comes from the following result (\cite{NijGeo},Corollary 2.1).

\begin{mycor}
	Let $\sigma_1,\dots,\sigma_n$ be the coefficients of the characteristic polynomial \begin{center}
		$\chi(t)=\det(t\cdot\text{Id}-L)=t^n+\sigma_1t^{n-1}+\sigma_2t^{n-2}+\cdots+\sigma_{n-1}t+\sigma_n$
	\end{center}
	of a Nijenhuis operator $L$. Then, in any local coordinate system $x_1,\dots,x_n$, the following matrix relation hold:
	\begin{equation}
		J(x)L(x)=S_{\chi}(x)J(x), \text{   where } S_{\chi}(x)=\left(
		\begin{array}{ccccc}
			-\sigma_1(x) & 1 &  &  & \\
			-\sigma_2(x)& 0 & 1 &  &  \\
			\vdots & \vdots & \ddots & \ddots &  \\
			-\sigma_{n-1}(x)& 0 & \dots & 0 & 1 \\
			-\sigma_n(x)& 0 & \dots & 0 & 0 
		\end{array}
		\right)
	\end{equation}
	and $J(x)$ is the Jacobi matrix of the collection of functions $\sigma_1,\dots,\sigma_n$ w.r.t. the variables $x_1,\dots,x_n$.
\end{mycor}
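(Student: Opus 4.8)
The plan is to read the matrix identity $J(x)L(x)=S_{\chi}(x)J(x)$ one row at a time and reduce it to a single structural property of Nijenhuis operators. Writing $J$ as the matrix whose $k$-th row is the covector $\mathrm{d}\sigma_k$ (with entries $\partial\sigma_k/\partial x^j$), the left-hand side has $k$-th row $L^*\mathrm{d}\sigma_k$, where $L^*$ denotes the pointwise transpose acting on covectors, $(L^*\alpha)_j=\alpha_s L^s_j$. Reading off the special shape of $S_\chi$, the identity becomes equivalent to the system of covector relations
\begin{equation*}
L^*\mathrm{d}\sigma_k=\mathrm{d}\sigma_{k+1}-\sigma_k\,\mathrm{d}\sigma_1,\qquad k=1,\dots,n,
\end{equation*}
where we set $\sigma_{n+1}:=0$ (so the last row gives $L^*\mathrm{d}\sigma_n=-\sigma_n\,\mathrm{d}\sigma_1$). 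Thus it suffices to establish this recursion.

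The next step is to pass from the elementary symmetric functions $\sigma_k$ to the power sums $p_k:=\operatorname{tr}(L^k)$ and to the covectors $\omega_k:=\operatorname{tr}(L^k\,\mathrm{d}L)$, whose components are $\operatorname{tr}(L^k\partial_j L)$. Since $\mathrm{d}\,p_{k+1}=(k+1)\,\omega_k$ by cyclicity of the trace, and since the $\sigma_k$ are the usual Newton polynomials in the $p_k$, the whole recursion is governed by the single family of identities
\begin{equation*}
L^*\omega_k=\omega_{k+1},\qquad k\ge 0.
\end{equation*}
The cleanest way to see this is in generating-function form: using the Jacobi formula $\mathrm{d}\chi(t)=-\chi(t)\operatorname{tr}\!\big((t\,\mathrm{Id}-L)^{-1}\mathrm{d}L\big)$ and expanding the resolvent as $\sum_{k\ge0}L^k t^{-k-1}$, the relation $L^*\omega_k=\omega_{k+1}$ turns $\mathrm{d}\chi(t)$ into the closed identity $L^*\mathrm{d}\chi(t)=t\,\mathrm{d}\chi(t)-\chi(t)\,\mathrm{d}\sigma_1$, and comparing coefficients of powers of $t$ unpacks this back into the recursion above. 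So the entire corollary rests on the one lemma $L^*\omega_k=\omega_{k+1}$.

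The heart of the argument — and the only place the hypothesis $\mathcal N_L=0$ enters — is precisely this lemma, which I would prove by contracting the torsion components $(\mathcal N_L)^i_{jm}$ with $(L^k)^m_i$ and summing over $i,m$. After using $\sum_i(L^k)^m_i L^i_s=(L^{k+1})^m_s$ and $\sum_m L^s_m (L^k)^m_i=(L^{k+1})^s_i$, the first term of $(\mathcal N_L)^i_{jm}$ collapses to $(L^*\omega_k)_j$ and the third to $-(\omega_{k+1})_j$, while the two remaining terms cancel against one another after a relabelling of the summed indices. Hence $0=(\mathcal N_L)^i_{jm}(L^k)^m_i=(L^*\omega_k)_j-(\omega_{k+1})_j$, giving the lemma for every $k$; the case $k=0$ is the familiar contraction $\sum_i(\mathcal N_L)^i_{ji}=0$, yielding $L^*\mathrm{d}\operatorname{tr}L=\tfrac12\mathrm{d}\operatorname{tr}L^2$, and the general step is the same computation with $\mathrm{Id}$ replaced by $L^k$. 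I expect the main obstacle to be exactly this index bookkeeping — verifying that the two unwanted terms cancel and that the contraction produces $L^*\omega_k$ and $\omega_{k+1}$ in the right slots — since everything else is formal manipulation of traces, Newton's identities, and the resolvent expansion.
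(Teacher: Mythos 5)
Your proof is correct, but there is no internal proof to compare it with: the paper never proves this corollary, it imports it verbatim from \cite{NijGeo} (Corollary 2.1 there) and uses it only as the starting point of the classification algorithm. Your argument therefore supplies a self-contained proof rather than a variant of the paper's. I checked the two substantive steps and both hold. Reading $JL=S_\chi J$ row by row is indeed equivalent to the recursion $L^*\mathrm{d}\sigma_k=\mathrm{d}\sigma_{k+1}-\sigma_k\,\mathrm{d}\sigma_1$ with $\sigma_{n+1}:=0$, since row $k$ of $S_\chi$ has $-\sigma_k$ in column $1$ and $1$ in column $k+1$. For the key lemma, contracting the paper's expression $(\mathcal N_L)^i_{jm}=L^s_j\partial_s L^i_m-L^s_m\partial_s L^i_j-L^i_s\partial_j L^s_m+L^i_s\partial_m L^s_j$ with $(L^k)^m_i$ gives $(L^*\omega_k)_j$ from the first term and $-(\omega_{k+1})_j$ from the third, while the second and fourth become $-(L^{k+1})^s_i\,\partial_s L^i_j$ and $+(L^{k+1})^m_s\,\partial_m L^s_j$, which cancel after renaming dummy indices; hence $L^*\omega_k=\omega_{k+1}$ exactly as you claim. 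The generating-function step is also sound: Jacobi's formula $\mathrm{d}\chi(t)=-\chi(t)\operatorname{tr}\bigl((t\,\mathrm{Id}-L)^{-1}\mathrm{d}L\bigr)$ together with $\omega_0=\mathrm{d}\operatorname{tr}L=-\mathrm{d}\sigma_1$ yields $L^*\mathrm{d}\chi(t)=t\,\mathrm{d}\chi(t)-\chi(t)\,\mathrm{d}\sigma_1$, and although the resolvent expansion requires $|t|$ large, both sides are polynomial in $t$, so the identity holds for all $t$ and comparing coefficients of $t^{n-k}$ returns precisely the recursion. Two minor remarks: the appeal to Newton's identities is superfluous, since the Jacobi-formula computation works directly with the $\sigma_k$ and no passage through power sums is needed; and it is worth stating explicitly that your argument is pointwise and requires no genericity of $L$ or invertibility of $J$ --- which matters here, because the paper applies the corollary before imposing differential non-degeneracy, and only formula (2) needs $J^{-1}$.
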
 

If one assumes that d$\sigma_1,\dots,$d$\sigma_n$ are linearly independent almost everywhere, i.e. $\sigma_1,\dots,\sigma_n$ are functionally independent, then the matrix $J(x)^{-1}$ is well defined almost everywhere, and from formula (1) one can obtain: \begin{equation}L(x)=J(x)^{-1}S_{\chi}(x)J(x).
\end{equation} 

\begin{myrem}
	The operators $L$ calculated with this formula might have singular points, since inverting the matrix $J(x)$, i.e. the Jacobian, also implies dividing by the determinant of $J(x)$ and this can be zero at some points, namely where the differentials of the $\sigma$s start to be linearly dependent. This means that $L$ can have poles. Actually, if $\sigma_1,\dots,\sigma_n$ are chosen arbitrarily, in almost all the cases in which the differentials of $\sigma_1,\ldots, \sigma_n$ are linearly dependent, the formula for $L$ will have a pole at the point $x = 0$.
\end{myrem}

The general scheme of the algorithm is as follows:
\begin{itemize}
	\item \textbf{Step 1}: Choose polynomials $\sigma_i(x)$ for $i=1,\dots,n$ in a proper way with real coefficients to be determined;
	\item \textbf{Step 2}: Compute $L$ using formula (2);
	\item \textbf{Step 3}: Find the values of the coefficients of $\sigma_i$s for which the entries of $L$ become linear, in the sense that whenever a fraction in the expression of $L$ appears, then the values found for the coefficients of $\sigma_i$s are such that each denominator divides the corresponding numerator.
\end{itemize}

To  understand better the steps of the algorithm we illustrate them first for the 2-dimensional case, and then for the 3-dimensional one.

\subsubsection{2-dimensional case}

\subsubsection*{Step 1}
The polynomials $\sigma_i$ have to be chosen such that $\sigma_i(x_1,\dots,x_n)$ is homogeneous in $x_1,\dots,x_n$ of degree $i$ and such that $\sigma_1,\dots,\sigma_n$ are algebraically independent.

To avoid redundant degrees of freedom in the results, i.e. to avoid to get operators which differ only by a linear coordinate change, we simplify the polynomials $\sigma$s via linear coordinate changes, reducing the number of parameters needed. This reduction will also simplify the computations.

First we present the reduction of $\sigma_1(x_1,x_2)$. Its most general form is $\sigma_1(x_1,x_2) = ax_1+bx_2$ for $a,b\in\mathbb{R}.$ 

\begin{myprop} 
	Let $(x_1,x_2)$ and $(y_1,y_2)$ be two coordinate systems in $\mathbb{R}^2$. One can always transform \begin{center}
		\begin{tabular}{ccccc}
			$\sigma_1(x_1,x_2) = ax_1+bx_2$ & & to & & $\sigma_1(y_1,y_2)=y_1$
		\end{tabular}
	\end{center} via a linear coordinate change. Here we assume that $a,b\in\mathbb{R}$ can't be simultaneously zero, to assure $\sigma_1$ to have degree 1.
\end{myprop}

\begin{proof}
	Choose $y_1:=ax_1+bx_2$ and choose $y_2$ such that the determinant of the coordinate change matrix is not zero.
\end{proof}

We proceed now with the reduction of $\sigma_2(x_1,x_2)$.
\begin{myprop} 
	Let $(x_1,x_2)$ and $(y_1,y_2)$ be two coordinate systems in $\mathbb{R}^2$. Consider $\sigma_1(x_1,x_2)=x_1$ and 				$$\sigma_2(x_1,x_2)=a_{11}x_1^2+2a_{12}x_1x_2+a_{22}x_2^2$$
	for $a_{11},a_{12},a_{22}\in\mathbb{R}$ non simultaneously zero. Assuming that $\sigma_1$ and $\sigma_2$ have to remain algebraically independent, one can transform $\sigma_2$ to one of the following:
	\begin{align*}
		1.\quad\sigma_2(y_1,y_2) &= ay_1^2\pm y_2^2 \\
		2.\quad\sigma_2(y_1,y_2) &= y_1y_2
	\end{align*} 
	for $a\in\mathbb{R}$, via a linear coordinate change which maps $x_1$ to $y_1$.
\end{myprop}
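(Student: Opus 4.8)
The plan is to first pin down exactly which linear coordinate changes are admissible. Since the previous reduction has already brought $\sigma_1$ to the form $\sigma_1=x_1$, and we require $\sigma_1$ to become $\sigma_1=y_1$, the change of variables is forced to send $x_1\mapsto y_1$. Writing the old coordinates in terms of the new ones, the admissible changes are therefore precisely
\[
x_1=y_1,\qquad x_2=p\,y_1+q\,y_2,\qquad q\neq 0,
\]
with two real parameters $p$ and $q\neq 0$; the condition $q\neq0$ guarantees invertibility and that $y_2$ genuinely involves $x_2$. Note that $y_1$ cannot be rescaled, which is precisely why a free parameter will survive in the first normal form.

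Next I would substitute this into $\sigma_2$ and collect terms, obtaining
\[
\sigma_2=\big(a_{11}+2a_{12}p+a_{22}p^2\big)\,y_1^2+2q\big(a_{12}+a_{22}p\big)\,y_1y_2+a_{22}q^2\,y_2^2 .
\]
Before choosing $p,q$, I would record the key observation that algebraic independence of $\sigma_1$ and $\sigma_2$ is equivalent to $(a_{12},a_{22})\neq(0,0)$: if both vanish then $\sigma_2=a_{11}\,\sigma_1^2$. The whole argument then hinges on the dichotomy $a_{22}\neq0$ versus $a_{22}=0$, since $a_{22}$ is the only coefficient controlling the $y_2^2$ term.

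In the case $a_{22}\neq0$, I would first choose $p=-a_{12}/a_{22}$ to annihilate the cross term, and then choose $q$ with $q^2=1/|a_{22}|$ to normalize the coefficient of $y_2^2$ to $\pm1$ (the sign being that of $a_{22}$). The residual coefficient of $y_1^2$ then equals $a:=a_{11}-a_{12}^2/a_{22}$, giving normal form $1$, namely $\sigma_2=a\,y_1^2\pm y_2^2$. In the remaining case $a_{22}=0$, algebraic independence forces $a_{12}\neq0$ and the $y_2^2$ term is already absent; here I would instead choose $p=-a_{11}/(2a_{12})$ to kill the $y_1^2$ term and $q=1/(2a_{12})$ to scale the cross term to $1$, giving normal form $2$, namely $\sigma_2=y_1y_2$. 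Both chosen values of $q$ are nonzero, so both transformations are admissible.

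The step I expect to require the most care is confirming that the case split is genuinely exhaustive and that no admissible change can bridge the two normal forms. This is where one must argue invariantly: the coefficient of $y_2^2$ transforms as $a_{22}q^2$, so it vanishes exactly when $a_{22}$ does, meaning the presence or absence of a $y_2^2$ term is preserved under all admissible changes and cleanly separates form $1$ from form $2$. Likewise, within form $1$ the sign of the $y_2^2$ coefficient is a genuine invariant because $q^2>0$, which justifies retaining $\pm$. Verifying that each output is still algebraically independent of $y_1$ is immediate, since each normal form depends nontrivially on $y_2$.
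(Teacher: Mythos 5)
Your proof is correct and takes essentially the same route as the paper's: the same initial observation that algebraic independence forces $(a_{12},a_{22})\neq(0,0)$, the same dichotomy $a_{22}\neq 0$ versus $a_{22}=0$, and exactly the same coordinate changes (your $p=-a_{12}/a_{22}$, $q=1/\sqrt{|a_{22}|}$ and $p=-a_{11}/(2a_{12})$, $q=1/(2a_{12})$ reproduce the paper's two substitutions). Your additional checks --- that the admissible changes are precisely those of the stated triangular form, and that the vanishing and sign of the $y_2^2$ coefficient are invariants separating the two normal forms --- go slightly beyond what the paper records, but do not change the argument.
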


\begin{proof}
	First observe that $a_{12}$ and $a_{22}$ can't vanish simultaneously, otherwise $\sigma_1$ and $\sigma_2$ become algebraically dependent. \newline If $a_{22}\ne0$, consider the coordinate change
	\begin{center}
		$\left(\begin{array}{c}
			x_1 \\
			x_2 
		\end{array}\right)=
		\left(\begin{array}{cc}
			1 & 0 \\
			-\frac{a_{12}}{a_{22}} & \frac{1}{\sqrt{|a_{22}|}} 
		\end{array}\right)
		\left(\begin{array}{c}
			y_1 \\
			y_2 
		\end{array}\right).$
	\end{center}
	Then, $\sigma_2(y_1,y_2)=(a_{11}-\frac{a_{12}^2}{a_{22}})y_1^2\pm y_2^2$ and setting $a:=a_{11}-\frac{a_{12}^2}{a_{22}}$ one get the case 1.
	If $a_{22}=0$, then $a_{12}\ne0$ and $a_{11}x_1^2+2a_{12}x_1x_2=x_1(a_{11}x_1+2a_{12}x_2)$ and defining $y_2:=a_{11}x_1+2a_{12}x_2$, we get $\sigma_2(y_1,y_2)=y_1y_2$, which is the second case.
\end{proof}

\subsubsection*{Step 2}

We compute manually the operators $L$ with formula (2) using the polynomials $\sigma_1,\sigma_2$ obtained in Step 1.

Let first $\sigma_1(x_1,x_2)=x_1$ and $\sigma_2(x_1,x_2)=x_1x_2$. Then
\begin{center}
	$L(x)=\left(\begin{array}{cc}
		1 & 0 \\
		-\frac{x_2}{x_1} & \frac{1}{x_1} 
	\end{array}\right)\left(\begin{array}{cc}
	-x_1  & 1 	\\
	-x_1 x_2  & 0 
\end{array}\right)\left(\begin{array}{cc}
		1 & 0 \\
		x_2 & x_1 
	\end{array}\right)=\left(\begin{array}{cc}
			-x_1 +x_2 & x_1 \\
			-\frac{x_2^2}{x_1} & -x_2 
		\end{array}\right)$
\end{center}
which clearly can't be linear and we exclude this case.

Now let $\sigma_1(x_1,x_2)=x_1$ and $\sigma_2(x_1,x_2) = ax_1^2\pm x_2^2$ for $a\in\mathbb{R}$. Then

	$$L(x)=\left(\begin{array}{cc}
		1 & 0 \\
		\mp a\frac{x_1}{x_2} & \pm\frac{1}{2x_2} 
	\end{array}\right)\left(\begin{array}{cc}
		-x_1  & 1 	\\
		-ax_1^2 \mp x_2^2  & 0 
	\end{array}\right)\left(\begin{array}{cc}
		1 & 0 \\
		2ax_1 & \pm 2x_2 
	\end{array}\right)=\left(\begin{array}{cc}
		(2a-1)x_1 & \pm 2x_2 \\
		\mp \frac{4a^2-a}{2x_2}x_1^2 -\frac{x_2}{2} & -2ax_1 
	\end{array}\right)$$
which can be linear for some values of a.

\subsubsection*{Step 3}
The operator $L$ found in the previous step is linear if and only if $4a^2-a=0$, which has solutions $a=0$ and $a=\frac{1}{4}$.
Substituting these values in the formula above we get 4 operators (2 from $\sigma_2(x_1,x_2)=ax_1^2+x_2^2$ and 2 from $\sigma_2(x_1,x_2)=ax_1^2-x_2^2$) which we report in Table 3, after having performed the coordinate change $\{x_1=-2x, x_2=y\}$. We also give their representation as LSA.
\newpage
\begin{center}
		\captionof{table}{2-dimensional differentially non-degenerate LSA}
	\begin{tabular}{|c|c|c|c|}
		\hline & & &\\[-2.5ex]
		Name & Structure relations & Nijenhuis operator $L$ & Coefficients of $\chi_{L}$\\
		\hline & & &\\[-1.5ex]
		$\mathfrak{b_4^+}$ & $\begin{aligned}
			\eta_1\star\eta_1 &= 2\eta_1 \\
			\eta_1\star\eta_2 &= \eta_2 \\
			\eta_2\star\eta_2 &= -\eta_1 \\
		\end{aligned}$ & $\left(\begin{array}{cc}  
			2x & -y \\
			y & 0  
		\end{array}\right)$ & $\begin{aligned}
			\sigma_1 &=  -2x\\
			\sigma_2 &=  y^2\\
		\end{aligned}$\\
		\hline& & &\\[-1.5ex]
		$\mathfrak{b_4^-}$ & $\begin{aligned}
			\eta_1\star\eta_1 &= 2\eta_1 \\
			\eta_1\star\eta_2 &= \eta_2 \\
			\eta_2\star\eta_2 &= \eta_1 \\
		\end{aligned}$ & $\left(\begin{array}{cc} 
			2x & y \\ 
			y & 0 
		\end{array}\right)$ & $\begin{aligned}
			\sigma_1 &=  -2x\\
			\sigma_2 &=  -y^2\\
		\end{aligned}$\\
		\hline& & &\\[-1.5ex]
		$\mathfrak{c_5^+}$ & $\begin{aligned}
			\eta_1\star\eta_1 &= \eta_1 \\
			\eta_1\star\eta_2 &= \eta_2 \\
			\eta_2\star\eta_1 &= \eta_2 \\
			\eta_2\star\eta_2 &= \eta_1 \\
			
		\end{aligned}$ &  $\left(\begin{array}{cc} 
			x & y \\ 
			y & x 
		\end{array}\right)$ & $\begin{aligned}
			\sigma_1 &=  -2x\\
			\sigma_2 &=  x^2-y^2\\
		\end{aligned}$\\
		\hline & & &\\[-1.5ex]
		$\mathfrak{c_5^-}$ & $\begin{aligned}
			\eta_1\star\eta_1 &= \eta_1 \\
			\eta_1\star\eta_2 &= \eta_2 \\
			\eta_2\star\eta_1 &= \eta_2 \\
			\eta_2\star\eta_2 &= -\eta_1 \\
		\end{aligned}$ & $\left(\begin{array}{cc} 
			x & -y \\
			y & x
		\end{array}\right)$ & $\begin{aligned}
			\sigma_1 &=  -2x\\
			\sigma_2 &=  x^2+y^2\\
		\end{aligned}$\\
		\hline
	\end{tabular}
\end{center}
Table 3 gives the classification of 2-dimensional differentially non-degenerate LSAs and agrees with the one given by A. Konyaev \cite{NijGeo2}.

\subsubsection{3-dimensional case}

\subsubsection*{Step 1}

We present the reduction of $\sigma_1$ and $\sigma_2$ in dimension 3.

\begin{myprop}
		Let $(x_1,x_2,x_3)$ and $(y_1,y_2,y_3)$ be two coordinate systems of $\mathbb{R}^3$. Given\begin{align*}
		\sigma_1(x_1,x_2,x_3) &= a x_1 + b x_2 + c x_3 \\
		\sigma_2(x_1,x_2,x_3) &= a_{11} x_1^{2} + 2 \, a_{12} x_1 x_2 + a_{22} x_2^{2} + 2 \, a_{13} x_1 x_3 + 2 \, a_{23} x_2 x_3 + a_{33} x_3^{2}
	\end{align*}
	where $a,b,c,a_{11},a_{12},a_{22},a_{13},a_{23},a_{33}\in\mathbb{R}$ and such that $a,b,c$ and also $a_{11},a_{12},a_{22},a_{13},a_{23},a_{33}$ are not simultaneously zero, one can always find a linear coordinate change such that $$\sigma_1(y_1,y_2,y_3) = y_1 $$ and $\sigma_2$ is one of
	\begin{align*}
		1.\quad\sigma_2(y_1,y_2,y_3) &= \alpha y_1^2 \pm y_2^2 \pm y_3^2 \\
		2.\quad\sigma_2(y_1,y_2,y_3) &= \alpha y_1^2 \pm y_2^2 \\
		3.\quad\sigma_2(y_1,y_2,y_3) &= y_1y_2\\ 
		4.\quad\sigma_2(y_1,y_2,y_3) &= y_1y_2 \pm y_3^2\\  
		5.\quad\sigma_2(y_1,y_2,y_3) &= \alpha y_1^2
	\end{align*} 
	for $\alpha\in\mathbb{R}$.
\end{myprop}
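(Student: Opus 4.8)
The plan is to reduce $\sigma_1$ and $\sigma_2$ simultaneously by a two-stage linear coordinate change. First I would fix the linear form: exactly as in Proposition 1.3 (the 2-dimensional reduction of $\sigma_1$), since $a,b,c$ are not all zero, I set $y_1 := ax_1+bx_2+cx_3$ and complete this to a basis by choosing $y_2,y_3$ freely so that the change-of-coordinates matrix is invertible. This normalizes $\sigma_1$ to $y_1$. The crucial observation is that any further coordinate change of the form $y_1\mapsto y_1$, $\begin{pmatrix} y_2\\ y_3\end{pmatrix}\mapsto A\begin{pmatrix} y_2\\ y_3\end{pmatrix}+\begin{pmatrix} c_2\\ c_3\end{pmatrix}y_1$ with $A\in GL(2,\mathbb{R})$ preserves the normalization $\sigma_1=y_1$. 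I would therefore use precisely this stabilizer to bring $\sigma_2$ into normal form, thinking of $\sigma_2$ as a quadratic form on $\mathbb{R}^3$.

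Next I would classify $\sigma_2$ under this restricted group action. Write $\sigma_2$ as the quadratic form associated to the symmetric matrix $Q=(a_{ij})$. The transformations available are: (i) shears $y_2\mapsto y_2+c_2 y_1$, $y_3\mapsto y_3+c_3 y_1$, which allow me to kill the cross terms $y_1y_2$ and $y_1y_3$ \emph{provided} the coefficient block in the $(y_2,y_3)$ variables is nondegenerate enough to absorb them; and (ii) the full $GL(2,\mathbb{R})$ acting on $(y_2,y_3)$, under which the restricted form $q(y_2,y_3)$ can be diagonalized and its nonzero coefficients scaled to $\pm 1$ by Sylvester's law of inertia. The case division in the proposition is exactly governed by the rank of the $2\times 2$ lower-right block $Q' = \begin{pmatrix} a_{22} & a_{23}\\ a_{23} & a_{33}\end{pmatrix}$: if $\operatorname{rank} Q' = 2$ I complete the square in $y_2,y_3$, absorb the mixed $y_1$ terms, and land in case 1 with a residual $\alpha y_1^2$; if $\operatorname{rank} Q'=1$ I diagonalize to a single $\pm y_2^2$ and must then examine the surviving $y_1 y_3$ term, which after rescaling gives either case 2 (when that term vanishes) or case 4 (when it survives, yielding the indecomposable $y_1y_2\pm y_3^2$); and if $\operatorname{rank} Q'=0$ the form is $y_1$ times a linear form, giving case 3 or, if that linear form is a multiple of $y_1$, case 5 with $\sigma_2=\alpha y_1^2$.

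Throughout I must impose the algebraic independence of $\sigma_1,\sigma_2$, which rules out the degenerate subcases in which $\sigma_2$ collapses to a polynomial in $y_1$ alone — this is why, for instance, one cannot have both the $\pm y_2^2$ and $\pm y_3^2$ terms absent unless a genuine $y_1y_2$-type cross term remains. I would organize the proof as a clean case analysis on $\operatorname{rank} Q'\in\{0,1,2\}$, recording in each branch the explicit shear and $GL(2)$ transformation (as in the matrix displayed in the proof of Proposition 1.4), and verifying at the end that the five listed forms are exhaustive and that each is compatible with $\sigma_1=y_1$.

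The main obstacle I anticipate is the bookkeeping in the intermediate-rank case $\operatorname{rank} Q'=1$: after diagonalizing the $(y_2,y_3)$-block to a single square, the leftover cross terms involving $y_1$ no longer decouple cleanly, and one has to decide carefully whether a residual $y_1 y_3$ term can be eliminated or whether it forces the genuinely different normal form $y_1 y_2 \pm y_3^2$ (case 4). Handling this requires tracking how the shears interact with the already-normalized square term rather than freely completing the square, so this branch is where the case distinctions between forms 2 and 4 actually arise and must be justified to be nonremovable.
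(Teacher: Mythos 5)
Your proof is correct, and it rests on the same underlying mechanism as the paper's: linear changes fixing $\sigma_1=y_1$, followed by completing squares. But it is organized differently, and the difference buys something real. The paper branches on the vanishing of individual coefficients: first $a_{33}\neq 0$ (then on whether $a_{22}-a_{23}^2/a_{33}$ vanishes, giving cases 1 and 4), then $a_{33}=0,\,a_{22}\neq 0$ with $a_{13}=0$ assumed (case 2), then $a_{33}=a_{22}=a_{23}=0$ (case 3), and finally the all-zero situation (case 5). Your organization by the rank of the block $Q'=\left(\begin{smallmatrix} a_{22} & a_{23}\\ a_{23} & a_{33}\end{smallmatrix}\right)$, together with the explicit identification of the stabilizer of $\sigma_1$ (shears $y_i\mapsto y_i+c_iy_1$ plus $GL(2,\mathbb{R})$ acting on $(y_2,y_3)$), is invariant under that group and therefore automatically covers configurations that the paper's coefficient-wise branching skips: for instance $a_{22}=a_{33}=0$, $a_{23}\neq 0$ is rank $2$, hence your case 1, but it falls through the cracks of the paper's case list; likewise, in the paper's rank-one branch the subcase where the residual $x_1x_2$ coefficient vanishes (so that the proposed $y_2:=\bigl(a_{11}-\tfrac{a_{13}^2}{a_{33}}\bigr)x_1+2\bigl(a_{12}-\tfrac{a_{13}a_{23}}{a_{33}}\bigr)x_2$ is not a coordinate and the form is actually case 2, not case 4) is glossed over — this is exactly the dichotomy you single out as the delicate point, and your treatment of it is the correct one. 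One small mismatch: you invoke algebraic independence of $\sigma_1,\sigma_2$ to exclude degenerate collapses, but the proposition as stated does not assume it — the dependent normal form $\alpha y_1^2$ is retained as case 5 and is only discarded afterwards (Remark 2.2). This does not affect your argument, since your rank-zero branch produces that form anyway; just do not use independence as a hypothesis inside the proof.
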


\begin{proof}
W.l.o.g. assume $a\ne0$ and take $a=1$. Consider 
	\begin{center}$\left(\begin{array}{c}
			x_1 \\
			x_2 \\
			x_3
		\end{array}\right)=
		\left(\begin{array}{ccc}
			1-b\delta-c\eta & -b\epsilon-c\theta & -b\zeta-c\iota \\
			\delta & \epsilon & \zeta \\
			\eta & \theta & \iota
		\end{array}\right)
		\left(\begin{array}{c}
			y_1 \\
			y_2 \\
			y_3
		\end{array}\right)$\end{center}
	 where $\delta,\epsilon,\zeta,\eta,\theta,\iota \in\mathbb{R}$ and such that the determinant of matrix $\epsilon\iota-\zeta\theta$ is not zero. This coordinate change is such that $\sigma_1(y_1,y_2,y_3)=y_1$ as we wanted. Thus, without loss of generality we may assume that $\sigma_1(x_1,x_2,x_3)=x_1$.
	 
	 Now we reduce $\sigma_2$, finding a coordinate change from $(x_1,x_2,x_3)$ to $(y_1,y_2,y_3)$ which fixes $y_1=x_1$ to preserve the shape of $\sigma_1$. First let us assume $a_{33}\ne0$.
	 Then 
	 \begin{equation*}
	 	\sigma_2(x_1,x_2,x_3) = a_{33}\left(\frac{a_{11}}{a_{33}} x_1^{2} + 2 \frac{a_{12}}{a_{33}} x_1 x_2 + \frac{a_{22}}{a_{33}} x_2^{2} + 2 \left(\frac{a_{13}}{a_{33}} x_1 + \frac{a_{23}}{a_{33}} x_2\right) x_3 + x_3^{2}\right)
	 \end{equation*}
	 and adding and subtracting $\left(\frac{a_{13}}{a_{33}} x_1 + \frac{a_{23}}{a_{33}} x_2\right)^2$ we get:
	\begin{equation}
		\sigma_2(x_1,x_2,y_3) = \left(a_{11} - \frac{a_{13}^2}{a_{33}}\right)x_1^{2} + 2\left(a_{12}-\frac{a_{13}a_{23}}{a_{33}}\right) x_1 x_2 + \left(a_{22}-\frac{a_{23}^2}{a_{33}}\right) x_2^{2} \pm y_3^2
	\end{equation}
	 where $y_3:= \sqrt{|a_{33}|}\left(\frac{a_{13}}{a_{33}} x_1 + \frac{a_{23}}{a_{33}} x_2 +x_3 \right)$. We now proceed analogously with the coefficient of $x_2^2$. We first assume $\left(a_{22}-\frac{a_{23}^2}{a_{33}}\right)\ne0$. Then we divide and multiply again every term (except $y_3^2$) with it and proceeding in the same way as above we get
	 \begin{equation*}
	 \sigma_2(y_1,y_2,y_3) = \alpha y_1^2\pm y_2^2\pm y_3^2
	 \end{equation*}
	 where $y_2:= \sqrt{|a_{22}-\frac{a_{23}^2}{a_{33}}|}\left(\frac{a_{12}a_{33}-a_{13}a_{23}}{a_{22}a_{33}-a_{23}^2} x_1 + x_2\right)$ and $\alpha = \frac{a_{11}a_{22}a_{33}-a_{11}a_{23}^2-a_{13}^2a_{22}-a_{12}^2a_{33}+2-a_{12}a_{13}a_{23}}{a_{22}a_{33}-a_{23}^2}$, and hence we arrive to Case 1. \newline Now we assume again $a_{33}\ne0$ but $\left(a_{22}-\frac{a_{23}^2}{a_{33}}\right)=0$. Then equation (3) becomes
	 \begin{equation*}
	 	\sigma_2(x_1,x_2,y_3) = \left(a_{11} - \frac{a_{13}^2}{a_{33}}\right)x_1^{2} + 2\left(a_{12}-\frac{a_{13}a_{23}}{a_{33}}\right) x_1 x_2 \pm y_3^2
	 \end{equation*}
	 and collecting $x_1$ in the first two terms and setting $y_2:=\left(a_{11} - \frac{a_{13}^2}{a_{33}}\right)x_1 + 2\left(a_{12}-\frac{a_{13}a_{23}}{a_{33}}\right) x_2$ we get the Case 4, i.e. $\sigma_2(y_1,y_2,y_3)=y_1y_2\pm y_3^2$.
	 
	 Now let us assume $a_{33}=0$ and $a_{22}\ne0$. To be in a different situation from the ones above, assume $a_{13}=0$. Then, we repeat the same trick as above and we get the Case 2, i.e. $\sigma_2(y_1,y_2,y_3) = \alpha y_1^2 \pm y_2^2$. 
	 
	 Now let $a_{33}=a_{22}=0$. If also $a_{23}=0$, then 
	 \begin{equation*}
		\sigma_2(x_1,x_2,x_3) = a_{11} x_1^{2} + 2 a_{12} x_1 x_2 + 2 a_{13} x_1 x_3 = x_1(a_{11} x_1 + 2 a_{12} x_2 + 2 a_{13} x_3)
	 \end{equation*} 
	 and defining $y_2:=a_{11} x_1 + 2 a_{12} x_2 + 2 a_{13} x_3$ we get $\sigma_2(y_1,y_2,y_3)=y_1y_2$, i.e. Case 3.
	 
	 Finally let us assume $a_{33}=a_{22}=a_{12}=a_{13}=a_{23}=0$. Then setting $\alpha:=a_{11}$, one get Case 5 and the proof is completed. 	 
\end{proof}

\begin{myrem}
	The $\sigma_2$ of Case 5 is algebraically dependent with $\sigma_1$, hence we will not consider it. We will see that only Case 1 gives us interesting results. 
\end{myrem}

Proposition 2.3 gives us nicer shape for $\sigma_1$ and $\sigma_2$. Concerning $\sigma_3$, we will use it in its most general form, i.e. 
\begin{equation*}
\begin{split}
	\sigma_3(x_1,x_2,x_3) = b_{11} x_1^{3} + 3b_{12} x_1^{2} x_2 + 3b_{21} x_1 x_2^{2} + b_{22} x_2^{3} + 3b_{13} x_1^{2} x_3\\
	 + 6 c x_1 x_2 x_3  + 3 b_{23} x_2^{2} x_3	+ 3 b_{31} x_1 x_3^{2} + 3 b_{32} x_2 x_3^{2} + b_{33} x_3^{3} \
\end{split}
\end{equation*}
for $c,b_{ij}\in\mathbb{R}$ for $i,j=1,2,3$.

\subsubsection*{Step 2}

We compute, with a computer algebra program, the operator $L$ via the formula (2), using the $\sigma$s found in the previous paragraph. In this way we find an operator $L(x)$ whose first row is linear, due to the form of the initial $\sigma$s. Indeed, the first row of the resulting $J^{-1}$ is $(1,0,0)$ and this means that the first row of $L$ is the first row of the product $S_{\chi}J$, which is $(-x_1+\partial_{x_1}\sigma_2,\partial_{x_2}\sigma_2, \partial_{x_3}\sigma_2)$. The two remain rows are not linear, but one can observe that they have all the same denominator. Namely, $L$ has the following form:
\begin{center}
	$\left(\begin{array}{ccc}
		\dots & \dots & \dots \\
		\cfrac{P_1(x)}{Q(x)} & \cfrac{P_2(x)}{Q(x)} & \cfrac{P_3(x)}{Q(x)} \\
		\cfrac{P_4(x)}{Q(x)} & \cfrac{P_5(x)}{Q(x)} & \cfrac{P_6(x)}{Q(x)} 
	\end{array}\right)$
\end{center}
where $P_i(x),Q(x)$ for $i=1,\dots,6$ are homogeneous polynomials in $x$, of degree 4 and 3 respectively, and depend on the parameters that compare in the $\sigma$s. The denominator $Q$ moreover coincides with the determinant of the Jacobian matrix. 

\subsubsection*{Step 3}
 We want to find values for the coefficients of $\sigma$s such that all terms of $L$ become linear, i.e. such that each numerator $P_i$ is divisible by $Q$. To achieve this goal, we consider for each $i=1,\dots,6$ the following equation:
\begin{equation}
	P_i(x)-Q(x)(\alpha_{i1}x_1+\alpha_{i2}x_2+\alpha_{i3}x_3)=0
\end{equation} 
where $\alpha_{i,j}$ for $i=1,\dots,6$ and $j=1,2,3$ are additional parameters in $\mathbb{R}$. If these equations are identically satisfied, then our entries of $L$ are linear. 

The equations (4) are identically satisfied if and only if for all $i$ the coefficients of the polynomial in its LHS vanish. Hence, the strategy is now to solve the system of equations formed by the coefficients of (4). The sageMath code that I used to generate this system of equations is reported in Listing 1.

Now we study each case from 1 to 4 separately. We start with the easiest ones.

\paragraph{Case 3} Let $\sigma_1(x_1,x_2,x_3)=x_1$ and $\sigma_2(x_1,x_2,x_3)=x_1x_2$. In this case, one can observe quite directly that it is not possible to make all the entries of $L$ linear in coordinates. We compute $L$ with formula (2) and we see that most of the non linear terms are simpler than in the general case, namely, $P_i$ and ${Q}$ have some common divisor for $i=1,2,3,4$ and the corresponding entries of $L$ become fractions with a polynomial of degree 2 at the numerator and a monomial of degree 1 at the denominator, namely the denominator is $kx_1$ for some $k\in\mathbb{R}$. Hence, a necessary condition for making these entries linear in $x$, is that the coefficients of the terms of the numerator in which $x_1$ does not appear, have to vanish, which in this case means:
\begin{equation*}
	b_{21}=\frac{1}{3}, c=b_{31}=b_{22}=b_{23}=b_{32}=b_{33}=0.
\end{equation*}
Substituting these values and computing again $L$ one sees that $\frac{P_4}{Q}$ and $\frac{P_5}{Q}$ can't become linear. For example the term $\frac{P_5}{Q}$ is: 
\begin{equation*}
-\frac{(3b_{12}^2+b_{11})x_1^2+5b_{12}x_1x_2+x_2^2+2b_{13}x_1x_3}{b_{13}x_1}
\end{equation*}
which, due to $x_2^2$ in the numerator, can't be linear. 

\paragraph{Case 2} Now let $\sigma_2(x_1,x_2,x_3)=ax_1^2\pm x_2^2$ for some $a\in\mathbb{R}$. As in Case 3, one can see quite directly that not all the entries of $L$ can be linear. Also in this case,  $P_i$ and ${Q}$ have some common divisor for $i=1,2,3,4$ and the corresponding entries of $L$ become fractions with a polynomial of degree 2 at the numerator and a monomial of degree 1 at the denominator, this time the denominator is of the form $kx_2$ for some $k\in\mathbb{R}$. Proceeding as in Case 3, we find conditions for the parameters:
\begin{equation*}
	b_{11}=\frac{4a^2-a}{3}, c=b_{31}=b_{12}=b_{13}=b_{32}=b_{33}=0.
\end{equation*}
Substituting these values and computing again $L$, one sees that a necessary condition for $\frac{P_5}{Q}$ to be linear is $b_{23}=0$, which makes the determinant of $J$ zero and hence it's not acceptable. 

The remaining cases are more complicated, due to the presence of all three variables $x_1,x_2,x_3$ in the expressions for $\sigma_2$, and we will actually need to solve the system of equation (4) we created. 

\paragraph{Case 4} Let $\sigma_2(x_1,x_2,x_3)=x_1x_2\pm x_3^2$. We generate the system of equations (4). Looking at the system one observes that to have solutions $b_{22}$ and $b_{23}$ have to vanish. After substituting these values, one equation of the system become $6b_{21}^2-b_{21}=0$. We proceed then substituting the two solutions for $b_{21}$ in the system. For $b_{21}=0$ one gets that also $b_{32},c,b_{13},b_{33},b_{31},b_{12}$ have to be zero, for which then only the parameter $b_{11}$ survives. But then $\sigma_3$ is functionally dependent with $\sigma_1$, hence it is not an interesting solution for us. Instead for $b_{21}=\frac{1}{6}$, one gets that $b_{32}=\pm \frac{1}{3}$ and $b_{32}=0$, which is a contradiction. Hence, also Case 4 gives us no interesting solutions.

\paragraph{Case 1} Now we deal with $\sigma_2(x_1,x_2,x_3)=ax_1^2\pm x_2^2\pm x_3^2$.
First we remark that if the signs of $x_2^2$ and $x_3^2$ are different, then with the following coordinate change 
\begin{gather*}
	y_1=x_1\\
	y_2 = x_2+x_3\\
	y_3 = x_2-x_3,
\end{gather*}
one gets $\sigma_2(y_1,y_2,y_3)=ay_1^2\pm y_2y_3$. We use this new shape for $\sigma_2$, since the computations become easier. \newline Computing the system of equations (4), ones sees that $b_{12}$ and $b_{13}$ have to be zero. We compute again the system setting $b_{12}=b_{13}=0$ and solve it with Maple. It gave us the solutions listed in Listing 2. Analyzing them (Listing 3), we find 5 different non trivial solutions which give us the operators $L_1,L_2,L_3,L_4,L_5$ presented in Table 4.

Now let $\sigma_2(x_1,x_2,x_3)=ax_1^2 - x_2^2 - x_3^2$. Again from the computed system of equation (4) one notice that $b_{12}$ and $b_{13}$ have to be zero in order to have solutions. This time Maple didn't give us a solution in a reasonable amount of time and hence we proceed in an other way. \newline We notice that $\sigma_1$ and $\sigma_2$ are rotation invariant, namely they remain constant after performing the transformation:  
\begin{align*}
	y_1 &= x_1\\
	y_2 &= x_2  \cos{\phi} + x_3 \sin{\phi}\\
	y_3 &= - x_2 \sin{\phi} + x_3 \cos{\phi}
\end{align*}
For this reason one can make one coefficient of $\sigma_3$ vanish. Hence I set each coefficient to zero, one at a time, and consider the corresponding system of equations. I collect all the solutions found and select only the non equivalent and non trivial ones. These are presented in Listing 4. The three resulting operators are again reported in Table 4, namely operator $L_6,L_7$ and $L_8$. \newline We remark that setting $c=b_{22}=b_{32}=0$ (or equivalently $c=b_{33}=b_{23}=0$) one still finds all the non equivalent solutions.

Now let us consider the remaining case: $\sigma_2(x_1,x_2,x_3)=ax_1^2 + x_2^2 + x_3^2$. Here we proceed exactly the same as for the previous case. But, we only found trivial or complex solutions, and hence no interesting solutions for us.
\begin{center}
	\captionof{table}{3-dimensional classification of linear Nijenhuis operators with algebraically independent coefficients of their characteristic polynomials}
	\begin{tabular}{|ccc|}
		\hline
		\multirow{2}{100pt}{\minitab[c]{$\sigma_1=x_1$}} & \multirow{2}{100pt}{\minitab[c]{$\sigma_2=x_2x_3$}} & \multirow{2}{200pt}{\minitab[c]{$\sigma_3=\frac{1}{3}x_3^3$}} \\
		& & \\
		\hline
		\multicolumn{3}{|c|}{} \\
		\multicolumn{3}{|c|}{$L_1:=\left(\begin{array}{rrr}
				-x_{1} & x_{3} & x_{2} \\
				-\frac{2}{3} \, x_{2} & 0 & x_{3} \\
				-\frac{1}{3} \, x_{3} & 0 & 0
			\end{array}\right)$} \\
		\multicolumn{3}{|c|}{} \\
		\hline
		\hline
		\multirow{2}{100pt}{\minitab[c]{$\sigma_1=x_1$}} & \multirow{2}{100pt}{\minitab[c]{$\sigma_2=x_2 x_3$}} & \multirow{2}{200pt} {\minitab[c]{$\sigma_3=-x_1x_2^2 - x_2^3 - x_2^2x_3$}} \\
		& & \\
		\hline
		\multicolumn{3}{|c|}{} \\
		\multicolumn{3}{|c|}{$L_2:=\left(\begin{array}{rrr}
				-x_{1} & x_{3} & x_{2} \\
				0 & x_{2} & 0 \\
				-x_{2} - x_{3} & -2 \, x_{1} - 3 \, x_{2} - 3 \, x_{3} & -x_{2}
			\end{array}\right)$} \\
		\multicolumn{3}{|c|}{} \\
		\hline
		\hline
		\multirow{2}{100pt}{\minitab[c]{$\sigma_1=x_1$}} & \multirow{2}{100pt}{\minitab[c]{$\sigma_2=\frac{1}{3}x_1^2 + x_2x_3$}} & \multirow{2}{200pt}{$\sigma_3=\frac{1}{27}x_1^3-\frac{1}{216}x_2^3+\frac{1}{3}x_1x_2x_3 -\frac{1}{8}x_2^2x_3+\frac{3}{8}x_2x_3^2+\frac{1}{8}x_3^3$} \\
		& & \\
		\hline
		\multicolumn{3}{|c|}{} \\
		\multicolumn{3}{|c|}{$L_3:=\left(\begin{array}{rrr}
				-\frac{1}{3} \, x_{1} & x_{3} & x_{2} \\
				-\frac{1}{3} \, x_{2} & -\frac{1}{3} \, x_{1} + \frac{1}{8} \, x_{2} + \frac{3}{8} \, x_{3} & \frac{9}{8} \, x_{2} + \frac{3}{8} \, x_{3} \\
				-\frac{1}{3} \, x_{3} & -\frac{1}{72} \, x_{2} - \frac{3}{8} \, x_{3} & -\frac{1}{3} \, x_{1} - \frac{1}{8} \, x_{2} - \frac{3}{8} \, x_{3}
			\end{array}\right)$} \\
		\multicolumn{3}{|c|}{} \\
		\hline
		\hline
		\multirow{2}{100pt}{\minitab[c]{$\sigma_1=x_1$}} & \multirow{2}{100pt}{\minitab[c]{$\sigma_2=\frac{1}{3}x_1^2 + x_2x_3$}} & \multirow{2}{200pt}{\minitab[c]{$\sigma_3=\frac{1}{27}x_1^3 -\frac{1}{27}x_2^3 + x_3^3 + \frac{1}{3}x_1x_2x_3$}} \\
		& & \\
		\hline
		\multicolumn{3}{|c|}{} \\
		\multicolumn{3}{|c|}{$L_4:=\left(\begin{array}{rrr}
				-\frac{1}{3} \, x_{1} & x_{3} & x_{2} \\
				-\frac{1}{3} \, x_{2} & -\frac{1}{3} \, x_{1} & 3 \, x_{3} \\
				-\frac{1}{3} \, x_{3} & -\frac{1}{9} \, x_{2} & -\frac{1}{3} \, x_{1}
			\end{array}\right)$} \\
		\multicolumn{3}{|c|}{} \\
		\hline
		\end{tabular}
		\begin{tabular}{|ccc|}
		\hline
		\multirow{2}{100pt}{\minitab[c]{$\sigma_1=x_1$}} & \multirow{2}{100pt}{\minitab[c]{$\sigma_2=\frac{1}{4}x_1^2 + x_2x_3$}} & \multirow{2}{200pt}{$\sigma_3=\frac{1}{2}x_1x_2^2 - x_2^3 + \frac{1}{4}x_1x_2x_3 - \frac{1}{4}x_2^2x_3 	+ \frac{1}{32}x_1x_3^2 + \frac{1}{16}x_2x_3^2 + \frac{1}{64}x_3^3$} \\
		& & \\
		\hline
		\multicolumn{3}{|c|}{} \\
		\multicolumn{3}{|c|}{$L_5:=\left(\begin{array}{rrr}
				-\frac{1}{2} \, x_{1} & x_{3} & x_{2} \\
				-\frac{3}{8} \, x_{2} + \frac{1}{32} \, x_{3} & -\frac{1}{4} \, x_{1} + \frac{1}{4} \, x_{2} + \frac{1}{16} \, x_{3} & \frac{1}{16} \, x_{1} + \frac{3}{16} \, x_{2} + \frac{3}{64} \, x_{3} \\
				\frac{1}{2} \, x_{2} - \frac{3}{8} \, x_{3} & x_{1} - 3 \, x_{2} - \frac{3}{4} \, x_{3} & -\frac{1}{4} \, x_{1} - \frac{1}{4} \, x_{2} - \frac{1}{16} \, x_{3}
			\end{array}\right)$} \\
		\multicolumn{3}{|c|}{} \\
		\hline
		\hline
		\multirow{2}{100pt}{\minitab[c]{$\sigma_1=x_1$}} & \multirow{2}{100pt}{\minitab[c]{$\sigma_2=\frac{1}{4}x_1^2 - x_2^2-x_3^2 $}} & \multirow{2}{200pt}{\minitab[c]{$\sigma_3=- \frac{1}{2} x_1 x_2^2 + x_2^2 x_3$}} \\
		& & \\
		\hline
		\multicolumn{3}{|c|}{} \\
		\multicolumn{3}{|c|}{$L_6:=\left(\begin{array}{rrr}
				-\frac{1}{2} \, x_{1} & -2 \, x_{2} & -2 \, x_{3} \\
				-\frac{1}{4} \, x_{2} & 0 & -\frac{1}{2} \, x_{2} \\
				-\frac{1}{2} \, x_{3} & -x_{2} & -\frac{1}{2} \, x_{1}
			\end{array}\right)$} \\
		\multicolumn{3}{|c|}{} \\
		\hline
		\hline
		\multirow{2}{100pt}{\minitab[c]{$\sigma_1=x_1$}} & \multirow{2}{100pt}{\minitab[c]{$\sigma_2=\frac{1}{3}x_1^2 - x_2^2-x_3^2$}} & \multirow{2}{200pt} {$\sigma_3=\frac{1}{27} x_1^3 - \frac{1}{3}x_1x_3^2 - \frac{1}{3}x_1x_2^2 - \frac{1}{\sqrt{3}}x_2^2x_3 - \frac{2}{3\sqrt{3}}x_3^3 $} \\
		& & \\
		\hline
		\multicolumn{3}{|c|}{} \\
		\multicolumn{3}{|c|}{$L_7:=\left(\begin{array}{rrr}
				-\frac{1}{3} \, x_{1} & -2 \, x_{2} & -2 \, x_{3} \\
				-\frac{1}{3} \, x_{2} & -\frac{1}{9} \, \sqrt{3} {\left(\sqrt{3} x_{1} + 3 \, x_{3}\right)} & \frac{1}{6} \, \sqrt{3} x_{2} \\
				-\frac{1}{3} \, x_{3} & \frac{2}{3} \, \sqrt{3} x_{2} & -\frac{1}{9} \, \sqrt{3} {\left(\sqrt{3} x_{1} - 3 \, x_{3}\right)}
			\end{array}\right)$} \\
		\multicolumn{3}{|c|}{} \\
		\hline
		\hline
		\multirow{2}{100pt}{\minitab[c]{$\sigma_1=x_1$}} & \multirow{2}{100pt}{\minitab[c]{$\sigma_2=\frac{1}{3}x_1^2 - x_2^2-x_3^2$}} & \multirow{2}{200pt}{$\sigma_3=\frac{1}{27}x_1^3 - \frac{1}{3}x_1x_3^2 - \frac{1}{3}x_1x_2^2 + \frac{2}{\sqrt{3}} x_2^2x_3 - \frac{2}{3\sqrt{3}}x_3^3$} \\
		& & \\
		\hline
		\multicolumn{3}{|c|}{} \\
		\multicolumn{3}{|c|}{$L_8:=\left(\begin{array}{rrr}
				-\frac{1}{3} \, x_{1} & -2 \, x_{2} & -2 \, x_{3} \\
				-\frac{1}{3} \, x_{2} & -\frac{1}{9} \, \sqrt{3} {\left(\sqrt{3} x_{1} + 3 \, x_{3}\right)} & -\frac{1}{3} \, \sqrt{3} x_{2} \\
				-\frac{1}{3} \, x_{3} & -\frac{1}{3} \, \sqrt{3} x_{2} & -\frac{1}{9} \, \sqrt{3} {\left(\sqrt{3} x_{1} - 3 \, x_{3}\right)}
			\end{array}\right)$} \\
		\multicolumn{3}{|c|}{} \\
		\hline
	\end{tabular}
\end{center}

\begin{myrem}
	Since we considered all possible combinations (up to linear coordinate changes) of algebraically independent $\sigma_1,\sigma_2,\sigma_3$ such that deg$(\sigma_i)=i$ we got all possible differentially non-degenerate LSAs.
\end{myrem}

\subsection*{Simplification}

The second part of the proof consist of writing the classification in a nicer way in which one can directly get some more information about the operators found and to derive the structure relations of the corresponding LSAs. \newline For each operator from Table 4, except $L_1$ which already has a nice form, we perform suitable coordinate change and get a new representation of them, as presented below.
\begin{itemize}
	\item[]\begin{equation*}
		L_2:
		\begin{split}
			x_1 &= -x-y \\
			x_2 &= x+y+z \\
			x_3 &= y
		\end{split}
		\qquad \qquad \qquad \qquad
		\left(\begin{array}{ccc}
			x & -z & -y \\
			y & 0 & -y \\
			0 & 0 & z
		\end{array}\right)
	\end{equation*}
	\item[]\begin{equation*}
		L_3:
		\quad
		\begin{split}
			x_1 &= -\frac{1}{3}x-\frac{1}{12}y-+\frac{1}{4}z\\ 
			x_2 &= \frac{\sqrt{3}}{6}y+\frac{\sqrt{3}}{2}z\\
			x_3 &= -\frac{1}{3}x+\frac{1}{6}y-\frac{1}{2}z
		\end{split}
		\qquad 
		\left(\begin{array}{ccc}
			2x-z & -y & z-x \\
			y & z & 0 \\
			0 & 0 & z
		\end{array}\right)
	\end{equation*}	
	\item[]\begin{equation*}
		L_4:
		\begin{split}
			x_1 &= -\frac{1}{3} \, x - \frac{1}{6} \, y + \frac{1}{2} \, z \\
			x_2 &= \frac{1}{6} \, \sqrt{3} y + \frac{1}{2} \, \sqrt{3} z \\
			x_3 &= -\frac{1}{3} \, x + \frac{1}{3} \, y - z
		\end{split}
		\qquad \qquad \qquad
		\left(\begin{array}{ccc} 
			x & -y & 0 \\ 
			y & x & 0 \\ 
			0 & 0 & z 
		\end{array}\right)
	\end{equation*}
	\item[]\begin{equation*}
		L_5:
		\begin{split}
			x_1 &= -\frac{1}{4}x-\frac{1}{2}y+\frac{1}{8}z \\
			x_2 &= \pm(y+\frac{1}{4}z) \\
			x_3 &= -\frac{1}{2}x+y-\frac{1}{4}z
		\end{split}
		\qquad \qquad \qquad
		\left(\begin{array}{ccc}  
			2x & -y & 0\\  
			y & 0 & 0\\  
			0 & 0 & z 
		\end{array}\right)
	\end{equation*}
	\item[]\begin{equation*}
		L_6:
		\begin{split}
			x_1 &= -\frac{1}{4}x-\frac{1}{2}z \\
			x_2 &= \pm y \\
			x_3 &= -\frac{1}{2}x+z
		\end{split}
		\qquad \qquad \qquad \qquad
		\left(\begin{array}{ccc} 
			2x & y & 0 \\ 
			y & 0 & 0 \\ 
			0 & 0 & z 
		\end{array}\right)
	\end{equation*}
	\item[]\begin{equation*}
		L_7:
		\quad
		\begin{split}
			x_1 &= -\frac{1}{3}x+\frac{\sqrt{3}}{6}z \\
			x_2 &= y \\
			x_3 &= -\frac{1}{3}x-\frac{\sqrt{3}}{3}z
		\end{split}
		\qquad \qquad \qquad
		\left(\begin{array}{ccc}
			2x-z & y & z-x \\
			y & z & 0 \\
			0 & 0 & z
		\end{array}\right)
	\end{equation*}
	\item[]\begin{equation*}
		L_8:
		\qquad
		\begin{split}
			x_1 =& -\frac{1}{3}y_1+y_2-\frac{\sqrt{3}}{3}y_3 \\
			x_2 =& -\frac{1}{3}y_1-y_2-\frac{\sqrt{3}}{3}y_3 \\
			x_3 =& -\frac{1}{3}y_1+\frac{2\sqrt{3}}{3}y_3
		\end{split}
		\qquad 
		\left(\begin{array}{ccc} 
			y_1 & 0 & 0 \\ 
			0 & y_2 & 0 \\ 
			0 & 0 & y_3 
		\end{array}\right)
		\simeq
		\left(\begin{array}{ccc} 
			x & y & 0 \\ 
			y & x & 0 \\ 
			0 & 0 & z 
		\end{array}\right)
	\end{equation*}		
\end{itemize}
where $(y_1,y_2,y_3)$ and $(x,y,z)$ are two other coordinate systems of $\mathbb{R}^3$ and where the last isomorphism for $L_8$ is given by $\{x=2(y_1+y_2),y=2(y_1-y_2),z=y_3\}$. \newline We then compute the structure relations as describe above, i.e. $\eta_i\star\eta_j=a_{ij}^k\eta_k$ for $i,j=1,2,3$ where $a_{ij}^k = \frac{\partial R^k_i}{\partial x_j}$ and $R_i^k$ are the components of the operators we found. This completes the proof of Theorem 1.1. 

\section{Discussion}

\begin{myrem}
The differentially non-degenerate LSAs presented in Table 1 are decomposable, in the sense that they can be written as direct product of differentially non-degenerate LSAs of lower dimension. We remark that the direct product of two (linear) Nijenhuis operators is a (linear) Nijenhuis operator and similarly that the product of two (differentially non-degenerate) LSAs is again a (differentially non-degernerate) LSA. As already written in Table 1, denoting with $\mathfrak{d}$ the only possible 1-dimensional non trivial LSA, for which the structure relation is $\eta_1\star \eta_1 = \eta_1$ and the Nijenhuis operator is the $1\times1$ matrix $L=(x)$ and using for dimension 2 the notation of Table 3, we see that $$L_5\simeq\mathfrak{b_4^+}\oplus\mathfrak{d},\quad L_6\simeq\mathfrak{b_4^-}\oplus\mathfrak{d},\quad L_8\simeq\mathfrak{c_5^+}\oplus\mathfrak{d}\simeq\mathfrak{d}\oplus\mathfrak{d}\oplus\mathfrak{d},\quad L_4\simeq\mathfrak{c_5^-}\oplus\mathfrak{d}.$$ This means that in our classification all possible 3-dimensional combinations of lower dimensional differentially non-degenerate LSAs are present. The remaining LSAs in the classification are indecomposable.
\end{myrem}

\begin{myrem}
	We worked in $\mathbb{R}$. If we look at $\mathbb{C}$, we have that  $\mathfrak{b_4^+}\simeq\mathfrak{b_4^-}$ and $\mathfrak{c_5^+}\simeq\mathfrak{c_5^-}$ over $\mathbb{C}$. As a consequence in dimension 3, $\mathfrak{b_4^+}\oplus\mathfrak{d}\simeq\mathfrak{b_4^-}\oplus\mathfrak{d}$ and $\mathfrak{c_5^+}\oplus\mathfrak{d}\simeq\mathfrak{c_5^-}\oplus\mathfrak{d}$ over $\mathbb{C}$. Moreover, also $L_3$ and $L_7$ are isomorphic over $\mathbb{C}$. This gives us a classification in dimension 3 of differentially non-degenerate LSAs over $\mathbb{C}$. Note that classify LSAs is in general not an easy task, due to their non associativity.
\end{myrem}

\begin{myrem}
	Looking at the indecomposable operators of our list, one could try to generalise them in dimension $n$. The generalisations we found are reported below.
\paragraph{$\mathbf{L_1}$:}
		 Set 
				\begin{equation*}
			\begin{split}
				&\sigma_1 = x_1 \\
				&\sigma_2 = x_2x_n \\
				&\vdots \\
				&\sigma_i = x_ix_n^{i-1} \\
				&\vdots \\
				&\sigma_{n-1} = x_{n-1}x_n^{n-2} \\
				&\sigma_n =\frac{1}{n}x_n^n \\
			\end{split}
		\end{equation*}
	(If $n=2k$, one should also consider the case $\sigma_n = -\frac{1}{n}x_n^n$).
		To get an explicit expression in $x_s$ for the Nijenhuis operator, we use the formula (2) with
		$$S_{\chi} = \left(\begin{array}{cccccc}
			-\sigma_1 & 1 &   &   &   & \\
			-\sigma_2 & 0 & 1 &   &   & \\
			-\sigma_3 &   & 0 & 1 &   & \\
			\vdots    &   &   & \ddots & \ddots & \\
			-\sigma_{n-1} &   &   &   & 0 & 1 \\ 
			-\sigma_n & 0 &   & \dots &   & 0 
		\end{array}\right).$$
		Therefore:
		$$J:= \left(\frac{\partial\sigma_i}{\partial x_j}\right) = \left(\begin{array}{cccccc}
			1 &  &   &   &   & 0\\
			& x_n &  &   &   & x_2\\
			&   & x_n^2 &  &   & 2x_3x_n\\
			&   &   & \ddots &  & \vdots\\
			&   &   &   & x_n^{n-2} & (n-2)x_{n-1}x_n^{n-3} \\ 
			&   &   &   &   & x_n^{n-1}
		\end{array}\right)$$
		and hence we get the following Nijenhuis operator:
		$$\left(\begin{array}{ccccccc}
			-x_1 & x_n &   &   &  &  & x_2\\
			\frac{1-n}{n}x_2 & 0 & x_n &   &  &  & 2x_3\\
			\frac{2-n}{n}x_3 &   & 0 & x_n &  &  & 3x_4\\
			\vdots    &   &  &  \ddots & \ddots &  & \vdots\\
			\vdots    &   &   &    &  0 & x_n & (n-2)x_{n-1}\\
			-\frac{2}{n}x_{n-1} &   &   &   & & 0 & x_n \\ 
			-\frac{1}{n}x_n & 0 &   & \dots &  &   & 0 
		\end{array}\right) $$
		From this we compute the structure relations as above and we get:
		\begin{align*}
			\eta_1\star\eta_1 &= -\eta_1 & \\
			\eta_1\star\eta_s &= \tfrac{s-1-n}{n}\eta_s & \text{for }s=2,\dots,n \\
			\eta_s\star\eta_t &= 0 & \text{for } t=1,\dots,n-1 \text{ for } s = 2,\dots,n-1\\
			\eta_s\star\eta_n &= \eta_{s-1} & \text{for } s = 2,\dots,n-1 \\
			\eta_n\star\eta_1 &= 0 &\\
			\eta_n\star\eta_s &= (s-1)\eta_{s-1} & \text{for } s = 2,\dots,n-1 \\
			\eta_n\star\eta_n &= \eta_{n-1} &\\
		\end{align*}
	\paragraph{$\mathbf{L_2}$:} We start with the following expression for the coefficients of the characteristic polynomial.
		\begin{equation*}
			\begin{split}
				&\sigma_1 = -x_1-x_n \\
				&\sigma_2 = (x_1+x_2)x_n \\
				&\vdots \\
				&\sigma_i = (-1)^{i}(x_{i-1}+x_i)x_n^{i-1} \\
				&\vdots \\
				&\sigma_{n-1} = (-1)^{n-1}(x_{n-2}+x_{n-1})x_n^{n-2} \\
				&\sigma_n = (-1)^{n}x_{n-1}x_n^{n-1}. \\
			\end{split}
		\end{equation*}
		With
		$$S_{\chi} = \left(\begin{array}{cccccc}
			-\sigma_1 & 1 &   &   &   & \\
			-\sigma_2 & 0 & 1 &   &   & \\
			-\sigma_3 &   & 0 & 1 &   & \\
			\vdots    &   &   & \ddots & \ddots & \\
			-\sigma_{n-1} &   &   &   & 0 & 1 \\ 
			-\sigma_n & 0 &   & \dots &   & 0 
		\end{array}\right)$$ and $$J:= \left(\frac{\partial\sigma_i}{\partial x_j}\right) =$$ $$\left(\begin{array}{cccccc}
			1 &  &   &   &   & -1\\
			x_n& x_n &  &   &   & x_1+x_2\\
			& -x_n^2  & -x_n^2 &  &   & -2(x_2+x_3)x_n\\
			&   &   & \ddots &  & \vdots\\
			&   &   &  (-1)^{n-1}x_n^{n-2} & (-1)^{n-1}x_n^{n-2} & (-1)^{n-1}(n-2)(x_{n-2}+x_{n-1})x_n^{n-3} \\ 
			&   &   &   &  (-1)^{n}x_n^{n-1} & (-1)^{n}(n-1)x_{n-1}x_n^{n-2}
		\end{array}\right)$$
		we get the following Nijenhuis operator:
		$$\left(\begin{array}{ccccccc}
			x_1 & -x_n &   &   &  &  & -x_2\\
			x_2 & 0 & -x_n &   &  &  & -x_2 - 2x_3\\
			x_3 &   & 0 & -x_n &  &  & -2x_3 -3x_4\\
			\vdots    &   &  &  \ddots & \ddots &  & \vdots\\
			x_{n-2}    &   &   &    &  0 & -x_n & -(n-3)x_{n-2}-(n-2)x_{n-1}\\
			x_{n-1} &   &   &   & & 0 & -(n-2)x_{n-1} \\ 
			0 & 0 &   & \dots &  &   & x_n 
		\end{array}\right). $$
		In this case, the structure relations are:
		\begin{align*}
			\eta_1\star\eta_s &= \eta_s & \text{for }s=1,\dots,n-1 \\
			\eta_1\star\eta_n &= 0 &\\
			\eta_s\star\eta_t &= 0 & \text{for } t=1,\dots,n-1 \text{ for } s = 2,\dots,n-1\\
			\eta_s\star\eta_n &= \eta_{s-1} & \text{for } s = 2,\dots,n-1 \\
			\eta_n\star\eta_1 &= 0 &\\
			\eta_n\star\eta_s &= -(s-1)(\eta_{s-1}+\eta_s) & \text{for } s = 2,\dots,n-1 \\
			\eta_n\star\eta_n &= \eta_{n-1}. & \\
		\end{align*}
		We remark also that these two $n$-dimensional examples presented above are differentially non-degenerate LSA by construction.

\paragraph{$\mathbf{L_3}$ and $\mathbf{L_7}$:}
Looking at the two remaining indecomposable operators of the list in Table 4, we see that the following generalisations are other examples of differentially non-degenerate LSAs of dimension $n$. 
First for $n$ odd, i.e, $n=2k+1$, $k\in\mathbb{N}$, consider  
	$$\left(\begin{array}{cc|cc|c|cc|c|c}
	2x_1-x_n & \pm x_2 &  &  &  &  &  &  & x_n-x_1\\
	x_2      & x_n &  &  &  &  &  &  & \\
	\hline
	 &  & 2x_3-x_n & \pm x_4 &  &  &  &  & x_n-x_3\\
	 &  & x_4      & x_n &  &  &  &  & \\
	\hline
	 \vdots &  &  &  & \ddots &  &  &  & \vdots\\
	 \hline
	 &  &  &  &  & 2x_{2j+1}-x_n & \pm x_{2j+2} &  & x_n-x_{2j+1}\\
	 &  &  &  &  & x_{j+1}  & x_n     &  & \\ 
	\hline
	\vdots &  &  &  &  &  &  & \ddots & \vdots\\
	\hline
	 &  &  &  &  &  &  &  &  x_n  
\end{array}\right)$$
for $j=0,\dots,k-1$. Its structure relations are
\begin{align*}
	&\eta_{2j+1}\star\eta_{2j+1} = 2\eta_{2j+1} \\
	&\eta_{2j+1}\star\eta_{2j+2} = \eta_{2j+2} \\
	&\eta_{2j+1}\star\eta_n = -\eta_{2j+1} \\
	&\eta_{2j+2}\star\eta_{2j+1} = 0 \\
	&\eta_{2j+2}\star\eta_{2j+2} = \pm \eta_{2j+1} \\
	&\eta_{2j+2}\star\eta_n = \eta_{2j+2} \\
	&\eta_n\star\eta_{2j+1} = -\eta_{2j+1}\\
	&\eta_n\star\eta_{2j+2} = 0 \\
	&\eta_n\star\eta_n = \sum_{i=0}^{k}\eta_{2i+1} 
\end{align*}
for $j=0,\dots,k-1$ and it is a differentially non-degenerate LSA. \newline Now let $n$ be even, and for convenience $n=2k+2$, for $k\in\mathbb{N}$. Consider
$$\left(\begin{array}{cc|c|cc|c|c|c}
	2x_1-x_n & \pm x_2 &  &  &  &  &  & x_n-x_1\\
	x_2      & x_n &  &  &  &  &  & \\
	\hline
	\vdots &  & \ddots &  &  &  &  &  \vdots\\
	\hline
	&  &  & 2x_{2j+1}-x_n & \pm x_{2j+2} &  &  & x_n-x_{2j+1}\\
	&  &  & x_{j+1}  & x_n     &  &  &\\ 
	\hline
	\vdots &  &  &  &  & \ddots &  & \vdots\\
	\hline
	 &  &  &  &  &  & 2x_{n-1}-x_{n} & x_n-x_{n-1}\\
	\hline
	&  &  &  &  &  &  &  x_n  
\end{array}\right) $$
for $j=0,\dots,k-1$. Its structure relations coincide to the ones of the operator above (for $n$ odd), but for $j=0,\dots,k$ and $\eta_n\star\eta_n = \eta_n + \sum_{i=0}^{k}\eta_{2i+1}$. This LSA is differentially non-degenerate.
\end{myrem}

Further possible works to do are to find other $n$-dimensional general form of this kind of operators. Moreover one could try to find the classification of the same objects in dimension 4.

\newpage
\section{Appendix: code}
\begin{lstlisting}[language=Python, caption= Function to generate the system of equations ]
	# ring of the coefficients of my polynomials in variables x1,x2,x3
	S.<a,b_11,b_12,b_13,b_31,b_21,b_22,b_23,b_32,b_33,c,alpha11,alpha12,alpha13,alpha21,alpha22,alpha23,alpha31,alpha32,alpha33,alpha41,alpha42,alpha43,alpha51,alpha52,alpha53,alpha61,alpha62,alpha63>=QQ[]
	
	def system_generator(S, a=a, b_11=b_11, b_12=b_12, b_13=b_13, b_31=b_31, b_21=b_21, b_22=b_22, b_23=b_23, b_32=b_32, b_33=b_33,	c=c, alpha11=alpha11, alpha12=alpha12, alpha13=alpha13, alpha21=alpha21, alpha22=alpha22, alpha23=alpha23, alpha31=alpha31, alpha32=alpha32, alpha33=alpha33, alpha41=alpha41, alpha42=alpha42, alpha43=alpha43, alpha51=alpha51, alpha52=alpha52, alpha53=alpha53, alpha61=alpha61, alpha62=alpha62, alpha63=alpha63):
		''' function that takes as input:
			- the ring for the coefficients
			- the parameters
		and returns:
			- the system of equations given by the coefficients
		'''
	
		# definition of the space where I work ,to be able to get easily the coeff.
		R.<x1,x2,x3> = S[]
	
		# definition of the sigmas 
		sigma_1=x1
	
		#sigma_2=a*x1^2 + x2x3           #case 1.1
		#sigma_2=a*x1^2 - x2^2 - x3^2    #case 1.2
		#sigma_2=a*x1^2 + x2^2 + x3^2    #case 1.3
		#sigma_2=a*x1^2 + x2^2           #case 2.1
		#sigma_2=a*x1^2 - x2^2           #case 2.2
		sigma_2=x1*x2                   #case 3
		#sigma_2=x1*x2 + x3^2            #case 4.1
		#sigma_2=x1*x2 - x3^2            #case 4.2
		
		sigma_3=b_11 * x1^3 + 3 * b_12 * x1^2 * x2 + 3 * b_13 * x1^2 * x3 + 3 * b_31 * x1 * x3^2 + 3 * b_21 * x1 * x2^2 + b_22 * x2^3 + 3 * b_23 * x2^2 * x3 + 3 * b_32 * x2 * x3^2 + b_33 * x3^3 + 6 * c * x1 * x2 * x3
	
		# computation of L via formula (2)
		J=jacobian((sigma_1,sigma_2, sigma_3),(x1,x2,x3))
		S=matrix([[-sigma_1, 1, 0],[-sigma_2, 0, 1],[-sigma_3, 0, 0]])
		L=J^(-1)*S*J
		#view(L)
	
		# generation of the system of equations
		Q = J.determinant()
		
		P1 = L[1,0]*Q
		P2 = L[1,1]*Q
		P3 = L[1,2]*Q
		P4 = L[2,0]*Q
		P5 = L[2,1]*Q
		P6 = L[2,2]*Q
	
		coeff1 = ((P1-Q*(alpha11*x1+alpha12*x2+alpha13*x3)).numerator()).coefficients()
		coeff2 = ((P2-Q*(alpha21*x1+alpha22*x2+alpha23*x3)).numerator()).coefficients()
		coeff3 = ((P3-Q*(alpha31*x1+alpha32*x2+alpha33*x3)).numerator()).coefficients()
		coeff4 = ((P4-Q*(alpha41*x1+alpha42*x2+alpha43*x3)).numerator()).coefficients()
		coeff5 = ((P5-Q*(alpha51*x1+alpha52*x2+alpha53*x3)).numerator()).coefficients()
		coeff6 = ((P6-Q*(alpha61*x1+alpha62*x2+alpha63*x3)).numerator()).coefficients()
	
		coeff = coeff1 + coeff2 + coeff3 + coeff4 + coeff5 + coeff6
	
		print('The system consists of', len(coeff), 'equations')
		view(coeff)
	
		return coeff
\end{lstlisting}

\begin{lstlisting}[language=python, caption = Maple solutions - Case 1.1]
	{a = 0, alpha11 = 0, alpha12 = -1/3, alpha13 = 0, alpha21 = 0, alpha22 = 0, alpha23 = 0, alpha31 = 0, alpha32 = 0, alpha33 = 0, alpha41 = 0, alpha42 = 0, alpha43 = -2/3, alpha51 = 0, alpha52 = 3*b_22, alpha53 = 0, alpha61 = 0, alpha62 = 0, alpha63 = 0, b_11 = 0, b_21 = 0, b_22 = b_22, b_23 = 0, b_31 = 0, b_32 = 0, b_33 = 0, c = 0}
	
	{a = 0, alpha11 = 0, alpha12 = -2/3, alpha13 = 0, alpha21 = 0, alpha22 = 0, alpha23 = 0, alpha31 = 0, alpha32 = 0, alpha33 = 3*b_33, alpha41 = 0, alpha42 = 0, alpha43 = -1/3, alpha51 = 0, alpha52 = 0, alpha53 = 0, alpha61 = 0, alpha62 = 0, alpha63 = 0, b_11 = 0, b_21 = 0, b_22 = 0, b_23 = 0, b_31 = 0, b_32 = 0, b_33 = b_33, c = 0}
	
	{a = 1/3, alpha11 = 0, alpha12 = -1/3, alpha13 = 0, alpha21 = -1/3, alpha22 = 0, alpha23 = 0, alpha31 = 0, alpha32 = 0, alpha33 = 3b_33, alpha41 = 0, alpha42 = 0, alpha43 = -1/3, alpha51 = 0, alpha52 = -1/(9b_33), alpha53 = 0, alpha61 = -1/3, alpha62 = 0, alpha63 = 0, b_11 = 1/27, b_21 = 0, b_22 = -1/(27*b_33), b_23 = 0, b_31 = 0, b_32 = 0, b_33 = b_33, c = 1/18}
	
	{a = 0, alpha11 = 0, alpha12 = 0, alpha13 = 0, alpha21 = 0, alpha22 = -3b_23, alpha23 = 0, alpha31 = 0, alpha32 = 0, alpha33 = 0, alpha41 = 0, alpha42 = -9b_23^2, alpha43 = -1, alpha51 = -18b_23^2, alpha52 = 81b_23^3, alpha53 = 9b_23, alpha61 = 0, alpha62 = 3b_23, alpha63 = 0, b_11 = 0, b_21 = -3b_23^2, b_22 = 27b_23^3, b_23 = b_23, b_31 = 0, b_32 = 0, b_33 = 0, c = 0}
	
	{a = a, alpha11 = alpha11, alpha12 = alpha12, alpha13 = alpha13, alpha21 = alpha21, alpha22 = alpha22, alpha23 = alpha23, alpha31 = alpha31, alpha32 = alpha32, alpha33 = alpha33, alpha41 = alpha41, alpha42 = alpha42, alpha43 = alpha43, alpha51 = alpha51, alpha52 = alpha52, alpha53 = alpha53, alpha61 = alpha61, alpha62 = alpha62, alpha63 = alpha63, b_11 = 0, b_21 = 0, b_22 = 0, b_23 = 0, b_31 = 0, b_32 = 0, b_33 = 0, c = 0}
	
	{a = 0, alpha11 = 0, alpha12 = -1, alpha13 = -9b_32^2, alpha21 = 0, alpha22 = 0, alpha23 = 3b_32, alpha31 = -18b_32^2, alpha32 = 9b_32, alpha33 = 81b_32^3, alpha41 = 0, alpha42 = 0, alpha43 = 0, alpha51 = 0, alpha52 = 0, alpha53 = 0, alpha61 = 0, alpha62 = 0, alpha63 = -3b_32, b_11 = 0, b_21 = 0, b_22 = 0, b_23 = 0, b_31 = -3b_32^2, b_32 = b_32, b_33 = 27b_32^3, c = 0}
	
	{a = 1/3, alpha11 = 0, alpha12 = -1/3, alpha13 = 0, alpha21 = -1/3, alpha22 = 1/(64b_32), alpha23 = 3b_32, alpha31 = 0, alpha32 = 9b_32, alpha33 = 192b_32^3, alpha41 = 0, alpha42 = 0, alpha43 = -1/3, alpha51 = 0, alpha52 = -1/(36864b_32^3), alpha53 = -3/(64b_32), alpha61 = -1/3, alpha62 = -1/(64b_32), alpha63 = -3b_32, b_11 = 1/27, b_21 = 0, b_22 = -1/(110592b_32^3), b_23 = -1/(192b_32), b_31 = 0, b_32 = b_32, b_33 = 64*b_32^3, c = 1/18}
	
	{a = 1/4, alpha11 = 0, alpha12 = -3/8, alpha13 = 72b_32^2, alpha21 = -1/4, alpha22 = 1/(192b_32), alpha23 = 3b_32, alpha31 = 144b_32^2, alpha32 = 9b_32, alpha33 = 5184b_32^3, alpha41 = 0, alpha42 = 1/(4608b_32^2), alpha43 = -3/8, alpha51 = 1/(2304b_32^2), alpha52 = -1/(36864b_32^3), alpha53 = -1/(64b_32), alpha61 = -1/4, alpha62 = -1/(192b_32), alpha63 = -3b_32, b_11 = 0, b_21 = 1/(13824b_32^2), b_22 = -1/(110592b_32^3), b_23 = -1/(576b_32), b_31 = 24b_32^2, b_32 = b_32, b_33 = 1728*b_32^3, c = 1/24}
\end{lstlisting}

\begin{lstlisting}[language=python, caption = Analysis of the solutions - Case 1.1]
	## I Solution:
	# 
	# a = 0
	# b_11 = 0
	# b_12 = 0
	# b_13 = 0
	# b_21 = 0
	# b_31 = 0
	# b_23 = 0
	# b_32 = 0
	# b_22 = 0
	# b_33 = b_33 --> choose as 1/3
	# c = 0
	
	x1,x2,x3 = var('x1,x2,x3')
	
	sigma_1=x1
	
	sigma_2=x2*x3
	
	sigma_3= 1/3 * x3^3 
	
	
	## II Solution:
	# 
	# a = 0
	# b_11 = 0
	# b_12 = 0
	# b_13 = 0
	# b_21 = -3*b_23^2 --> -1/3
	# b_31 = 0
	# b_23 = b_23 --> -1/3
	# b_32 = 0
	# b_22 = 27*b_23^3 --> -1 
	# b_33 = 0
	# c = 0
	
	x1,x2,x3 = var('x1,x2,x3')
	
	sigma_1=x1
	
	sigma_2=x2 * x3
	
	sigma_3= -x1*x2^2 - x2^3 - x2^2*x3
	

	## III Solution:
	# 
	# a = 1/3
	# b_11 = 1/27
	# b_12 = 0
	# b_13 = 0
	# b_21 = 0
	# b_31 = 0
	# b_23 = -1/(192*b_32) --> - 1/24
	# b_32 = b_32 --> 1/8
	# b_22 = -1/(110592*b_32^3) --> - 1/216
	# b_33 = 64*b_32^3 --> 1/8
	# c = 1/18
	
	x1,x2,x3 = var('x1,x2,x3')
	
	sigma_1=x1
	
	sigma_2=1/3 * x1^2 + x2 * x3
	
	sigma_3=1/27*x1^3 - 1/216*x2^3 + 1/3*x1*x2*x3 - 1/8*x2^2*x3 + 3/8*x2*x3^2 + 1/8*x3^3
	
	
	## IV Solution:
	# 
	# a = 1/3
	# b_11 = 1/27
	# b_12 = 0
	# b_13 = 0
	# b_21 = 0
	# b_31 = 0
	# b_23 = 0
	# b_32 = 0
	# b_22 = -1/(27*b_33) --> -1/27
	# b_33 = b_33 --> 1
	# c = 1/18
	
	x1,x2,x3 = var('x1,x2,x3')
	
	sigma_1=x1
	
	sigma_2=1/3 * x1^2 + x2 * x3
	
	sigma_3=1/27 * x1^3 -1/27 * x2^3 + x3^3 + 1/3 * x1 * x2 * x3
	
	
	## V Solution:
	# 
	# a = 1/4
	# b_11 = 0
	# b_12 = 0
	# b_13 = 0
	# b_21 = 1/(13824*b_32^2) --> 1/6
	# b_31 = 24*b_32^2 --> 1/96
	# b_23 = -1/(576*b_32) --> -1/12
	# b_32 = b_32 --> 1/48
	# b_22 = -1/(110592*b_32^3) --> -1
	# b_33 = 1728*b_32^3 --> 1/64
	# c = 1/24
	
	
	x1,x2,x3 = var('x1,x2,x3')
	
	sigma_1=x1
	
	sigma_2=1/4 * x1^2 + x2 * x3
	
	sigma_3=1/2*x1*x2^2 - x2^3 + 1/4*x1*x2*x3 - 1/4*x2^2*x3 + 1/32*x1*x3^2 + 1/16*x2*x3^2 + 1/64*x3^3
\end{lstlisting}

\begin{lstlisting}[language=python, caption= Analysis of the solutions - Case 1.2]
	# Solution 1
	#
	# a=1/4
	# b_11 = 0
	# b_12 = 0
	# b_13 = 0
	# b_21 = -1/6
	# b_31 = 0
	# b_22 = 0
	# b_23 = 1/3
	# b_32 = 0
	# b_33 = 0
	# c = 0
	
	x1,x2,x3 = var('x1,x2,x3')
	
	sigma_1=x1
	
	sigma_2=1/4 * x1^2 - x2^2 - x3^2
	
	#sigma_3=b_11 * x1^3 + 3 * b_31 * x1 * x3^2 + 3 * b_21 * x1 * x2^2 + b_22 * x2^3 + 3 * b_23 * x2^2 * x3 + 3 * b_32 * x2 * x3^2 + b_33 * x3^3 + 6 * c * x1 * x2 * x3
	sigma_3=- 1/2 * x1 * x2^2 + x2^2 * x3 
	
	
	# Solution 2 
	
	# RootOf(3*_Z^2 - 1) = p = +- 1/sqrt(3)
	
	# w.l.o.g choose p = 1/sqrt(3) 
	# a linear transformation x3--> -x3 cover the case p = -1/sqrt(3)
	
	# a=1/3
	# b_11 = 1/27
	# b_12 = 0
	# b_13 = 0
	# b_21 = -1/9
	# b_31 = -1/9
	# b_22 = 0
	# b_23 = -p/3 = - 1 / (3*sqrt(3))
	# b_32 = 0
	# b_33 = -(2*p)/3 = - 2 / (3*sqrt(3))
	# c = 0
	
	x1,x2,x3 = var('x1,x2,x3')
	
	sigma_1=x1
	
	sigma_2=1/3 * x1^2 - x2^2 - x3^2
	
	#sigma_3=b_11 * x1^3 + 3 * b_31 * x1 * x3^2 + 3 * b_21 * x1 * x2^2 + b_22 * x2^3 + 3 * b_23 * x2^2 * x3 + 3 * b_32 * x2 * x3^2 + b_33 * x3^3 + 6 * c * x1 * x2 * x3
	sigma_3=1/27 * x1^3 - 1/3 * x1 * x3^2 - 1/3 * x1 * x2^2 - 1 /sqrt(3) * x2^2 * x3 - 2 / (3*sqrt(3)) * x3^3 
	
	
	# Solution 3
	
	# RootOf(3*_Z^2 - 1) = p = +- 1/sqrt(3)
	
	# w.l.o.g. choose p = 1/sqrt(3)
	# a linear transformation x3--> -x3 cover the case p = -1/sqrt(3)
	
	# a=1/3
	# b_11 = 1/27
	# b_12 = 0
	# b_13 = 0
	# b_21 = -1/9
	# b_31 = -1/9
	# b_22 = 0
	# b_23 = +(2*p)/3 = + 2 / (3*sqrt(3))
	# b_32 = 0
	# b_33 = -(2*p)/3 = - 2 / (3*sqrt(3))
	# c = 0
	
	x1,x2,x3 = var('x1,x2,x3')
	
	sigma_1=x1
	
	sigma_2=1/3 * x1^2 - x2^2 - x3^2
	
	#sigma_3=b_11 * x1^3 + 3 * b_31 * x1 * x3^2 + 3 * b_21 * x1 * x2^2 + b_22 * x2^3 + 3 * b_23 * x2^2 * x3 + 3 * b_32 * x2 * x3^2 + b_33 * x3^3 + 6 * c * x1 * x2 * x3
	sigma_3=1/27 * x1^3 - 1/3 * x1 * x3^2 - 1/3 * x1 * x2^2 + 2 /sqrt(3) * x2^2 * x3 - 2 / (3*sqrt(3)) * x3^3 
\end{lstlisting}

\end{document}